\documentclass[11pt,reqno]{amsart}
\usepackage{amsmath,amssymb,amsthm,amsfonts,amscd,color,flafter,graphicx,verbatim,pinlabel,mathrsfs,setspace}
\usepackage{mathtools}
\usepackage{enumitem}
\usepackage{hyperref}
\usepackage{graphicx}
\usepackage{subcaption}
\usepackage{float}
\usepackage{abstract}
\usepackage{cleveref}
\usepackage{microtype}

\newtheorem{theorem}{Theorem}[section]
\newtheorem{lemma}[theorem]{Lemma}
\newtheorem{proposition}[theorem]{Proposition}
\newtheorem{corollary}[theorem]{Corollary}
\newtheorem{question}{Question}

\theoremstyle{definition}
\newtheorem{definition}[theorem]{Definition}

\theoremstyle{remark}
\newtheorem{remark}[theorem]{Remark}

\usepackage{placeins}

\makeatletter
\renewcommand{\@settitle}{%
  \begin{center}
    \Large\bfseries\boldmath
    \@title\par
  \end{center}%
}
\makeatother

\title{On the Concordance Invariants of Cable Links}
\author{Aadhishwaran Bhuvaraghamoorthy}
\date{}

\begin{document}
\raggedbottom
\maketitle

\begin{abstract}

We study the Beliakova--Wehrli link $s$-invariant and Cavallo's
link $\tau$-invariant of coherently oriented cables. For knots
satisfying extremal four-ball genus and maximal self-linking
hypotheses, we establish bounds for the $s$-invariant of
$(p,q)$-cables, where $p$ and $q$ are not necessarily coprime,
and show that these bounds are sharp under certain conditions.
For the $\tau$-invariant, we prove analogous bounds for arbitrary
companion knots and exact formulas whenever
$\tau(K)=\pm g_4(K)\ne 0$. Specializing to the coherently oriented
cable links associated to an $n$-framed knot, we apply
these bounds to the lasagna $s$-invariant of Ren and Willis.
The resulting applications provide upper bounds for the lasagna
$s$-invariant of knot traces and a criterion for distinguishing
homeomorphic zero traces up to orientation-preserving
diffeomorphism.

\end{abstract}

\section{Introduction}
\label{sec:introduction}

Over the last decade, concordance invariants derived from Khovanov homology have taken a prominent role in the study of exotic phenomena in dimension four \cite{Piccirillo20,ManolescuPiccirillo23,MWW22}. This paper grew out of our attempt to compute $s$-invariants of cable links with an eye towards computing lasagna $s$-invariants of compact smooth $4$-manifolds with boundary. The latter were introduced by Ren and Willis in \cite{RW25} building on work of Manolescu and Neithalath \cite{MN22}, who gave an explicit description of skein lasagna modules for $2$-handlebodies in terms of cabled Khovanov--Rozansky homology. As such, computations of the lasagna $s$-invariant require computing $s$-invariants of certain cables of $n$-framed knots with prescribed orientations on each component.

Computing the $s$-invariant of cable links with arbitrary
orientations on their components is generally difficult. Ren
\cite{Ren23} computed these invariants for torus links with
arbitrary orientations, but results for general companion
knots remain limited. In this article, we use the link
cobordism inequalities of Beliakova and Wehrli \cite{BW08}
and Cavallo \cite{CavalloTau} to extend Van Cott's cabling
framework \cite{VanCott10} to the link $s$- and
$\tau$-invariants of coherently oriented cables.

Given an oriented knot $K\subset S^3$, Van Cott studied in
\cite{VanCott10} the Rasmussen and Ozsváth--Szabó concordance
invariants of $(p,q)$-cables when $\gcd(p,q)=1$. Using
Hedden's work \cite{MH09}, she obtained bounds on the
$\tau$-invariant of cables. Subsequently, Hom \cite{Hom2014}
strengthened these results. Computations of Rasmussen invariants of cable and satellite knots are also available in several special settings.
Wu \cite{Wu12} used simplified Khovanov--Rozansky complexes
to study $(2,2k+1)$-cables, while Lewark and Zibrowius
\cite{LewarkZibrowius24} obtained formulas for the Rasmussen
invariant over $\mathbb{F}_2$ for satellites with patterns
of wrapping number two. General formulas for the
characteristic-zero Rasmussen invariant of cables with
arbitrary longitudinal winding number remain limited. Our main result establishes Van Cott-type bounds for the
Rasmussen invariant of coherently oriented cable links
under some conditions on the smooth four-ball genus
$g_4(K)$ and the maximal self-linking number
$\overline{\operatorname{sl}}(K)$.

\subsection*{Notation}
Let $K\subset S^3$ be an oriented knot. We denote its
$(p,q)$-cable link by $K_{p,q}$, where $p>0$ is the
longitudinal winding number and $q\in\mathbb{Z}$ is the
meridional winding number, measured with respect to the
preferred longitude and meridian of $K$. Throughout this
paper, $K_{p,q}$ carries the coherent orientation induced
by $K$. It has $\gcd(p,q)$ components and is a knot precisely
when $\gcd(p,q)=1$.

For an oriented link $L\subset S^3$, let $r(L)$ denote the
link obtained by reversing the orientations of all its
components, and let $m(L)$ denote its mirror. We write
\[
-L\coloneqq m(r(L))
\]
for its orientation-reversed mirror. For a knot $K$, we
also use the notation $\overline{K}=m(K)$. Finally,
$\overline{\operatorname{sl}}(K)$ denotes the maximal
self-linking number among all transverse representatives
of the oriented knot type $K$ in
$(S^3,\xi_{\mathrm{std}})$.

For knots, Rasmussen's $s$-invariant \cite{Ras10} and the
Ozsv\'ath--Szab\'o $\tau$-invariant \cite{Ozsvath2003}
are smooth concordance invariants satisfying
\[
|s(K)| \leq 2g_4(K)
\qquad\text{and}\qquad
|\tau(K)| \leq g_4(K).
\]
Throughout this paper, we use the extensions to oriented
links introduced by Beliakova and Wehrli \cite{BW08}
and Cavallo \cite{CavalloTau}, respectively.

\begin{theorem}
\label{thm:main-s-cable}
Let $K\subset S^3$ be an oriented knot, and equip each cable $K_{p,q}$ with the coherent orientation induced by $K$. Suppose that one of the following conditions holds:
\[
\mathrm{(i)} \qquad
s(K)=2g_{4}(K)
\quad\text{and}\quad
s(K)=\overline{\operatorname{sl}}(K)+1,
\]
or
\[
\mathrm{(ii)} \qquad
s(K)=-2g_{4}(K)
\quad\text{and}\quad
s(\overline{K})=\overline{\operatorname{sl}}(\overline{K})+1.
\]
Then, for every integer $p>0$ and every integer $q$,
\[
p\,s(K)+(p-1)(q-1)
\leq
s(K_{p,q})
\leq
p\,s(K)+(p-1)(q+1).
\]
Moreover, if $K$ satisfies $\mathrm{(i)}$ and $q>0$, then
\[
s(K_{p,q})
=
p\,s(K)+(p-1)(q-1),
\]
whereas if $K$ satisfies $\mathrm{(ii)}$ and $q<0$, then
\[
s(K_{p,q})
=
p\,s(K)+(p-1)(q+1).
\]
\end{theorem}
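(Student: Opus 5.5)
We argue along the lines of Van Cott's cabling scheme, with the Beliakova--Wehrli cobordism inequality in place of the knot genus bound. The tools are three properties of the link $s$-invariant. First, if $\Sigma$ is a connected oriented cobordism between links $L_0,L_1$ with $\ell_0,\ell_1$ components, then $|s(L_1)-s(L_0)|\le -\chi(\Sigma)$, and there is a refined one-sided version for cobordisms built only from saddles and births. Second, for a transverse representative $T$ of $L$ we have the slice--Bennequin type bound $\operatorname{sl}(T)+1\le s(L)$ (Plamenevskaya/Shumakovitch, extended to links). Third, under mirroring, $s(m(L))$ and $-s(L)$ differ by a term controlled by the number of components. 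Case (ii) will follow from case (i) applied to $\overline{K}$, using $m(K)_{p,q}=m(K_{p,-q})$ and the mirror formula; we check that this symmetry swaps the two bounds. So we concentrate on (i).

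\emph{Lower bound in case (i).} Take a transverse representative $T$ of $K$ with $\operatorname{sl}(T)=\overline{\operatorname{sl}}(K)=s(K)-1$. The transverse $(p,q)$-cable of $T$ (a braided satellite) has
\[
\operatorname{sl}(T_{p,q})=p^2\operatorname{sl}(T)+(p-1)\,\bigl(q - p\,\mathrm{wr}\bigr)\text{-type corrections},
\]
computed in the braid picture as $\operatorname{sl}=\text{writhe}-\text{braid index}$. The relevant input is that the writhe picks up $p^2w(K)$ from the companion, and the twisting is corrected to have total framing $q$. For $q\ge p\,\operatorname{tb}$-type ranges this gives $\operatorname{sl}(K_{p,q})+1= p(\operatorname{sl}(T)+1)+(p-1)(q-1)$. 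Combined with $s\ge \operatorname{sl}+1$, this yields $s(K_{p,q})\ge p\,s(K)+(p-1)(q-1)$. For the remaining values of $q$ (very negative), we reduce to this range: we change $q$ by adding full twists. Each step $K_{p,q}\to K_{p,q+p}$ differs by a twist region realised by a cobordism of Euler characteristic $-(p-1)$ through saddles. We use the inequality in both directions, consistent with the linear slope $(p-1)$.

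\emph{Upper bound.} We construct an explicit cobordism. Start from $p$ parallel copies of $K$ with framing $0$, i.e.\ the cable $K_{p,0}$ slightly pushed off, and then perform $(p-1)|q|$ saddles (or a crossing-change-free sequence) to realise $K_{p,q}$ from the $p$-component link $K_{p,0}$ with $p$ disjoint genus-$g_4(K)$ surfaces capping it. More cleanly, $K_{p,q}$ bounds a surface obtained from $p$ parallel copies of a minimal genus slice surface for $K$ in $B^4$, joined by $|q|(p-1)$ half-twisted bands. Feeding this into the Beliakova--Wehrli genus bound $s(L)\le 1-\chi(\Sigma)$ (adjusted for the number of components) gives
\[
s(K_{p,q})\le 2p\,g_4(K)+(p-1)(|q|+1)-\text{(component correction)}.
\]
Since $s(K)=2g_4(K)$, this is $p\,s(K)+(p-1)(q+1)$ when $q\ge0$. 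For $q<0$ we instead use the bound from the mirror, or the one-sided saddle inequality, to get the sharper slope.

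\emph{Exactness for $q>0$.} Here we need to show the upper bound can be improved to $p\,s(K)+(p-1)(q-1)$. The plan is a resolution argument. Resolve the $p-1$ crossings of one full twist in the $q>0$ positive twist region, via oriented saddles, to get a cobordism from $K_{p,q}$ to $K_{p,q-1}$ (or to the $p$-component split-type link). Then induct on $q$ down to $q=1$, where $K_{p,1}$ is checked directly against the lower bound using positivity of the added twist. I expect this step to be the main obstacle, namely matching the lower bound exactly and tracking the component-count corrections in the Beliakova--Wehrli inequalities when $\gcd(p,q)>1$. I would first try the refined inequality for cobordisms consisting only of positive-crossing resolutions, in which $s$ changes by exactly one per saddle in the correct direction.
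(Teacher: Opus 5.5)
\textbf{There is a genuine gap: your outline never establishes the upper bound in case (i) for $q\le -2$, nor, symmetrically, the lower bound in case (ii) for $q\ge 2$.}

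\emph{What works.} Your lower bound in case (i) is sound and is essentially the paper's argument:
\begin{itemize}
\item A transverse cable at a large $N$ coprime to $p$ gives $s(K_{p,N})\ge p\,s(K)+(p-1)(N-1)$. The correct formula is $\operatorname{sl}(\mathcal T_{p,N})=p\operatorname{sl}(\mathcal T)+(p-1)N$, valid once $N/p$ exceeds the contact width; there is no $p^2$ term.
\item A connected cobordism of $(p-1)(N-q)$ saddles then carries this bound down to every $q$. Each unit step in $q$ is one copy of $\sigma_1\cdots\sigma_{p-1}$ and costs $p-1$ saddles; a full twist would cost $p(p-1)$.
\end{itemize}

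\emph{The gap.} The missing statement is $s(K_{p,q})\le p\,s(K)+(p-1)(q+1)$ for $q\le -2$ under (i). None of the tools you name gives it:
\begin{itemize}
\item The slice surface gives only $p\,s(K)+(p-1)(|q|-1)$.
\item The two-sided saddle inequality gives $s(K_{p,q})\le s(K_{p,N})+(p-1)(N-q)$, which has the wrong slope.
\item Mirroring turns it into the case-(ii) lower bound at positive $q$, which is the same problem. The transverse bound does not help there either, since nothing is assumed about $\overline{\operatorname{sl}}(\overline{K})$ in case (i).
\end{itemize}

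\emph{The missing idea.} You need the other half of Van Cott's monotonicity: $h(q)-h(r)\ge -(p-1)$ for $q>r$ coprime to $p$, where $h(q)=s(K_{p,q})/2-\frac{p-1}{2}q$. This is Van Cott's Theorem~2. Its proof rests on the normalization of slice-torus invariants on positive torus knots, which your outline never uses. The paper proceeds as follows:
\begin{enumerate}
\item It shows $h$ is non-increasing on all of $\mathbb{Z}$, using your saddle cobordism.
\item It imports the lower bound for arbitrary $q>r$ by bracketing with $Q>q>r>R$, where $Q,R$ are coprime to $p$.
\item It computes $h(N)$ exactly for large coprime $N$, combining the transverse lower bound with the slice-surface upper bound.
\end{enumerate}
Then $h(N)\le h(q)\le h(N)+(p-1)$ for $q<N$ is exactly both bounds, for every $q$.

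\emph{Exactness for $q>0$.} The step you flag as the main obstacle is actually easy. Your slice-surface count is off by $2(p-1)$. The surface $F_{p,q}$ is connected with $\chi=p(1-2g_4(K))-(p-1)|q|$, and $s(L)\le 1-\chi$ then gives $s(K_{p,q})\le 2p\,g_4(K)+(p-1)(|q|-1)$. Under (i) with $q>0$ this already equals the lower bound.

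Your induction would also work, using only $|s(K_{p,q})-s(K_{p,q-1})|\le p-1$ and the base case $s(K_{p,1})\le 2g_4(K_{p,1})\le p\,s(K)$. The hoped-for ``exact change by one per positive resolution'' is false in general. For example, resolving the positive crossing of a Reidemeister~II clasp between the two components of a split unlink raises $s$ from $-1$ to $0$.

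\emph{Reducing (ii) to (i) by mirroring.} This fails when $d=\gcd(p,q)>1$. For links one only has $s(L)\le -s(-L)$ in general; for the $d$-component unlink, $s(L)+s(-L)=2-2d$. So a lower bound for $s(\overline{K}_{p,-q})$ yields an upper bound for $s(K_{p,q})$, but not conversely. The paper avoids this by mirroring only the knots $K_{p,-N}$ with $\gcd(p,N)=1$. It then reruns the monotonicity argument for $K$ itself, and gets equality for $q<0$ from the slice-surface bound $s(K_{p,q})\ge -2g_4(K_{p,q})-(d-1)$.
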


As an immediate consequence, Theorem~\ref{thm:main-s-cable} applies to
quasi-positive knots and to their mirrors.

\begin{corollary}
\label{cor:quasi-positive-cables}
Let $K$ be a quasi-positive knot. Then the
bounds of Theorem~\ref{thm:main-s-cable} hold for $K$ and $\overline{K}$, and 
\[
s(K_{p,q})
=
p\,s(K)+(p-1)(q-1),
\]
for $q>0$, while
\[
s(\overline{K}_{p,q})
=
p\,s(\overline{K})+(p-1)(q+1).
\]
for $q<0$.
\end{corollary}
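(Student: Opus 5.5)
The plan is to reduce the corollary to Theorem~\ref{thm:main-s-cable} by checking that a quasi-positive knot $K$ satisfies hypothesis (i), and that its mirror $\overline{K}$ satisfies hypothesis (ii). The two displayed formulas are then exactly the ``moreover'' clauses of the theorem, applied to $K$ when $q>0$ and to $\overline{K}$ when $q<0$.

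First I will verify (i) for $K$. Since $K$ is quasi-positive, Rudolph's slice--Bennequin result says that $K$ is the closure of a braid $\beta=\prod_{j=1}^{b} w_j\sigma_{i_j}w_j^{-1}$ on $n$ strands, which bounds a complex curve in $B^4$ of genus $g=(b-n+1)/2$. By the Kronheimer--Mrowka local Thom conjecture, $g_4(K)=g$. The transverse braid closure has self-linking number $\operatorname{sl}=b-n=2g_4(K)-1$. Plamenevskaya's bound and Shumakovitch's bound give $\operatorname{sl}(T)\le s(K)-1$ for every transverse representative $T$, so
\[
2g_4(K)-1 \le \overline{\operatorname{sl}}(K) \le s(K)-1 \le 2g_4(K)-1 .
\]
Here the last inequality is Rasmussen's $|s(K)|\le 2g_4(K)$. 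All the inequalities are therefore equalities, which yields $s(K)=2g_4(K)$ and $s(K)=\overline{\operatorname{sl}}(K)+1$. This is hypothesis (i).

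Next I will verify (ii) for $\overline{K}$. We have $s(\overline{K})=-s(K)=-2g_4(K)=-2g_4(\overline{K})$, since mirroring negates $s$ and preserves $g_4$. The second condition of (ii) asks that $s(\overline{\overline{K}})=\overline{\operatorname{sl}}(\overline{\overline{K}})+1$. Because $\overline{\overline{K}}=K$, this is precisely the identity established above for $K$. So $\overline{K}$ satisfies (ii).

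Applying Theorem~\ref{thm:main-s-cable} then gives the bounds for both knots and the two exact formulas. The only nontrivial input is the chain of inequalities, in particular the equality $g_4(K)=(b-n+1)/2$ for quasi-positive knots. I expect this to be the main point requiring a citation (Rudolph, Kronheimer--Mrowka) rather than an argument. The remaining care is bookkeeping on orientation conventions: mirroring versus reversal, since $s$ is invariant under reversal.
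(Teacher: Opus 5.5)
Your proposal is correct and follows the paper's proof. You verify hypothesis (i) for $K$ from $s(K)=2g_4(K)$ and $\overline{\operatorname{sl}}(K)=s(K)-1$, then get (ii) for $\overline{K}$ from $s(\overline{K})=-s(K)$, $g_4(\overline{K})=g_4(K)$ and $m(\overline{K})=K$.

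The only difference is that the paper cites Shumakovitch and Plamenevskaya for the two equalities, whereas you derive them from the sandwich $2g_4(K)-1\le\overline{\operatorname{sl}}(K)\le s(K)-1\le 2g_4(K)-1$. For that sandwich you only need $g_4(K)\le (b-n+1)/2$ from Rudolph's braided surface, since the chain itself then forces equality; the appeal to Kronheimer--Mrowka is not actually required.
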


\begin{proof}
If $K$ is quasi-positive, then
\[
s(K)=2g_4(K)
\qquad\text{and}\qquad
\overline{\operatorname{sl}}(K)=s(K)-1;
\]
see \cite{Shumakovitch07,Plamenevskaya06}. Thus $K$ satisfies
hypothesis (i) of Theorem~\ref{thm:main-s-cable}. Meanwhile, since
\[
s(\overline{K})=-s(K)
\qquad\text{and}\qquad
g_4(\overline{K})=g_4(K),
\]
we have
\[
s(\overline{K})=-2g_4(\overline{K}).
\]
Since the mirror of $\overline{K}$ is $K$, the required
maximal self-linking condition is precisely
\[
s(K)=\overline{\operatorname{sl}}(K)+1,
\]
which holds by quasi-positivity. Hence $\overline{K}$
satisfies hypothesis (ii) of
Theorem~\ref{thm:main-s-cable}.
\end{proof}

\begin{remark}
\label{rem:quasi-positive-examples}
Since every positive knot is strongly quasi-positive \cite{Rudolph99},
Corollary~\ref{cor:quasi-positive-cables} applies, in particular, to
strongly quasi-positive knots, positive knots, positive braid knots, and
positive torus knots, together with their mirrors. Notice that the bounds hold for every integer $q$, whereas the upper and lower bounds are attained in the cases when $q>0$ for quasi-positive knots and when $q<0$ for their mirrors.
\end{remark}

For the $\tau$-invariant, no hypothesis on the companion knot is needed for the bounds. Combining the knot-cabling results of Van Cott \cite{VanCott10} and Hom \cite{Hom2014} with Cavallo's link cobordism inequalities \cite{CavalloTau}, we obtain the following extension to coherently oriented cables.

\begin{theorem}
\label{tau-main-thm}
Let $K\subset S^3$ be an oriented knot. For a positive integer
$p$ and an integer $q$, let $K_{p,q}$ denote the coherently
oriented $(p,q)$-cable of $K$, and set $d=\gcd(p,q)$. Here, $\tau$ denotes Cavallo's link invariant, which agrees with the Ozsváth--Szabó invariant on knots. Then
\[
p\,\tau(K)+\frac{(p-1)(q-1)+d-1}{2}
\le \tau(K_{p,q})
\le p\,\tau(K)+\frac{(p-1)(q+1)+d-1}{2}.
\]
Moreover, if $\tau(K)=g_4(K)>0$, then
\[
\tau(K_{p,q})
=
p\,\tau(K)+\frac{(p-1)(q-1)+d-1}{2}.
\]
If $\tau(K)=-g_4(K)<0$, then
\[
\tau(K_{p,q})
=
p\,\tau(K)+\frac{(p-1)(q+1)+d-1}{2}.
\]
\end{theorem}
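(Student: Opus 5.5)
The plan is to reduce everything to the knot case, where Van Cott \cite{VanCott10} gives, for $\gcd(p,q')=1$,
\[
p\,\tau(K)+\tfrac{(p-1)(q'-1)}{2}\le\tau(K_{p,q'})\le p\,\tau(K)+\tfrac{(p-1)(q'+1)}{2},
\]
and Hom \cite{Hom2014} identifies $\tau(K_{p,q'})$ exactly when $\varepsilon(K)=\pm1$, in particular when $\tau(K)=\pm g_4(K)\neq 0$. The link $K_{p,q}$ will then be compared with nearby coprime cables by explicit oriented saddle cobordisms, controlled by Cavallo's cobordism inequalities \cite{CavalloTau}. Note that for $d=1$ the claimed bounds are exactly Van Cott's, so only $d>1$ needs work. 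The correction $(d-1)/2$ is the expected one: for the positive torus link $T_{p,q}$ the right-hand side is $\frac{(p-1)(q-1)+d-1}{2}$, and this is the Cavallo normalization.

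\textbf{Step 1: the cobordisms.} In the cable pattern, $K_{p,q+1}$ is obtained from $K_{p,q}$ by inserting one extra braid word $\sigma_1\cdots\sigma_{p-1}$ in the $p$ parallel strands. Taking the oriented resolution of these $p-1$ positive crossings is a sequence of $p-1$ coherent saddle moves. This gives an oriented cobordism from $K_{p,q+1}$ to $K_{p,q}$ with Euler characteristic $-(p-1)$, whose component count changes from $\gcd(p,q+1)$ to $d$.

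Iterating, I will pick coprime integers $q_-<q<q_+$ (for instance $q_\pm=q\pm1$ when these are coprime to $p$, and otherwise the nearest such values). Concatenating the steps gives cobordisms
\[
K_{p,q_+}\to K_{p,q}\to K_{p,q_-},
\]
each a union of $(p-1)\cdot|\Delta q|$ saddles between links with known component counts. One could alternatively use only $d-1$ bands joining the components of $K_{p,q}$ to a single knot, but the crossing-resolution cobordisms are what make the comparison with Van Cott's knot cables transparent.

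\textbf{Step 2: apply Cavallo's inequality.} I will use it in the form: an oriented connected cobordism $\Sigma$ from $L_1$ to $L_2$ with $n_1,n_2$ components satisfies
\[
|\tau(L_1)-\tau(L_2)|\le g(\Sigma)+\tfrac{|n_1-n_2|}{2}.
\]
I will use the sharper one-sided version for saddles obtained by resolving positive crossings, namely that $\tau$ does not increase under such resolutions beyond the component correction. Feeding in the knot bounds for $q_\pm$ should give
\[
\tau(K_{p,q})\ge \tau(K_{p,q_-})+\text{(genus and component terms)}
\]
and a matching upper bound from $q_+$. The arithmetic $g=\frac{(p-1)|\Delta q|-|n_1-n_2|}{2}$ should then telescope to the stated bounds with the $(d-1)/2$ term. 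This bookkeeping is where I expect the main obstacle: the naive two-sided inequality loses a $(p-1)/2$ per unit of $q$. To avoid this I will use the one-sided positive-crossing-resolution inequality, which is monotone. I will also handle non-coprime intermediate values by always applying it to cobordisms whose ends are knots or $K_{p,q}$ itself. If the crude choice $q_\pm$ fails to give the sharp constant, I will instead first band $K_{p,q}$ to the knot $K_{p,q\pm1}$-type pattern using $d-1$ bands plus the crossing changes, and compare directly.

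\textbf{Step 3: exactness.} If $\tau(K)=g_4(K)>0$, Hom's theorem gives $\tau(K_{p,q'})=p\,\tau(K)+\frac{(p-1)(q'-1)}{2}$ for all coprime $q'$, i.e. Van Cott's lower bound is attained. Using the cobordism from $K_{p,q}$ down to $K_{p,q_-}$, monotonicity forces the lower bound for $K_{p,q}$ to be an equality. Symmetrically, if $\tau(K)=-g_4(K)<0$, the upper knot bound is attained, and the cobordism to $K_{p,q_+}$ forces equality with the upper bound. Alternatively, this case follows from the positive one by applying it to $-K$: the identities $\tau(-L)=-\tau(L)$ and $(-K)_{p,q}=-(K_{p,-q})$, with $d$ unchanged, convert one formula into the other.
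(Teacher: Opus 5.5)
Your architecture matches the paper's. You join $K_{p,q}$ by crossing-resolution cobordisms to coprime cables $K_{p,Q}$ and $K_{p,R}$ with $Q>q>R$, feed in the Van Cott--Hom knot bounds, and pinch in the equality cases. The gap is in Step~2, which is exactly where the $\frac{d-1}{2}$ must come from. The inequality you propose there is false, and the statement you are proving already rules it out.

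\emph{Counterexamples from the statement itself.} Take $K$ to be the unknot. The statement gives $\tau(H_+)\ge 1$ and $\tau(H_-)\le 0$ for the positive and negative Hopf links $H_\pm$, the $(2,\pm2)$-cables.
\begin{itemize}
\item A single saddle joins $H_+$ to the unknot $U$, with $g=0$, $n_1=2$, $n_2=1$. Your bound would give $\tau(H_+)\le\frac12$.
\item The genus-zero connected cobordism $H_+\to U\to H_-$ has $n_1=n_2=2$. Your bound would force $\tau(H_+)=\tau(H_-)$.
\end{itemize}

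\emph{Restricting to knot-ended cobordisms does not help.} Your genus formula $g=\frac{(p-1)|\Delta q|-|n_1-n_2|}{2}$ is right only when one end is a knot; in general it is $\frac{2+(p-1)|\Delta q|-n_1-n_2}{2}$. Even in that case the bookkeeping fails. For $K_{p,Q}\to K_{p,q}$ your bound reads $|\tau(K_{p,Q})-\tau(K_{p,q})|\le g+\frac{d-1}{2}=\frac{(p-1)(Q-q)}{2}$. Together with the analogous bound from $R$, this yields $p\,\tau(K)+\frac{(p-1)(q-1)}{2}\le\tau(K_{p,q})\le p\,\tau(K)+\frac{(p-1)(q+1)}{2}$, with no $d$-term at all. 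That lower bound is weaker than claimed. That upper bound contradicts the asserted value when $\tau(K)=-g_4(K)<0$ and $d>1$.

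Your fallback one-sided version, that $\tau$ does not increase under resolution, points the wrong way. What is needed is control on how far $\tau$ can drop.

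\emph{The missing ingredient} is the asymmetric inequality the paper extracts from the proof of Cavallo's Proposition~4.7. For a connected cobordism $\Sigma$ from $L_1$ to $L_2$,
$\tau(L_1)\le\tau(L_2)+g(\Sigma)+(|L_1|-1)$,
where the correction counts components of the incoming end only.
\begin{itemize}
\item Applied to $K_{p,Q}\to K_{p,q}$, the incoming end is a knot. The genus $g(\Sigma)=\frac{(p-1)(Q-q)-(d-1)}{2}$ produces the $+\frac{d-1}{2}$ in the lower bound.
\item Applied to $K_{p,q}\to K_{p,R}$, the sum $g(\Sigma)+d-1=\frac{(p-1)(q-R)+d-1}{2}$ gives the upper bound.
\end{itemize}
The paper packages this as monotonicity of $H(q)=\tau(K_{p,q})-\frac{d_q-1}{2}-\frac{p-1}{2}q$. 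With that in place, your pinching argument in Step~3 goes through unchanged.

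Drop the alternative via $-K$. In this normalization $\tau(-L)=-\tau(L)$ fails for links. The inequality above, applied to the saddle $U\to H_-$, gives $\tau(H_-)\ge 0$, whereas $H_-=-H_+$ and $-\tau(H_+)\le -1$. Using that identity would flip the sign of the $\frac{d-1}{2}$ term.
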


The unrestricted bounds and equality statements for $\tau$ suggest the following questions for the link $s$-invariant.

\begin{question}
\label{ques:general-s-cabling-bounds}
Let $K\subset S^3$ be an oriented knot, let $p$ be a positive
integer, and let $q\in\mathbb{Z}$. Equip the cable $K_{p,q}$
with the coherent orientation induced by $K$. Do the bounds
\[
p\,s(K)+(p-1)(q-1)
\leq s(K_{p,q})
\leq p\,s(K)+(p-1)(q+1)
\]
hold for every knot $K$, without any additional hypotheses?
\end{question}

\begin{question}
\label{ques:general-s-cabling-equality}
Let $K\subset S^3$ be an oriented knot, let $p$ be a positive
integer, and let $q\in\mathbb{Z}$. Equip the cable $K_{p,q}$
with the coherent orientation induced by $K$. Can the
self-linking hypotheses and the sign restrictions on $q$
be removed from the equality statements of
Theorem~\ref{thm:main-s-cable}? More precisely, if $s(K)=2g_4(K)>0$, is it always true that
\[
s(K_{p,q})=p\,s(K)+(p-1)(q-1)?
\]
Similarly, if $s(K)=-2g_4(K)<0$, is it always true that
\[
s(K_{p,q})=p\,s(K)+(p-1)(q+1)?
\]
\end{question}

To connect these cabling results with knot traces, we use the notation $K(a,b)$ introduced by Manolescu and Neithalath \cite{MN22} for parallel copies of a framed knot. If $K$ is an oriented knot equipped with framing $n$, then $K(a,b)$ is obtained by taking $a+b$ parallel copies of $K$ determined by the chosen framing, with $a$ copies oriented as $K$ and $b$ copies oriented oppositely. Each component represents the longitude determined by the framing and hence winds once in the longitudinal direction of a tubular neighborhood of $K$. In the cabling convention used throughout this paper, the underlying unoriented link is therefore the $(a+b,n(a+b))$-cable link. We note that Manolescu and Neithalath use the opposite ordering of the cabling parameters, so the same underlying unoriented link is denoted the $(n(a+b),a+b)$-cable in \cite{MN22}.

\begin{figure}[!htbp]
    \centering
    \includegraphics[width=0.4\textwidth]{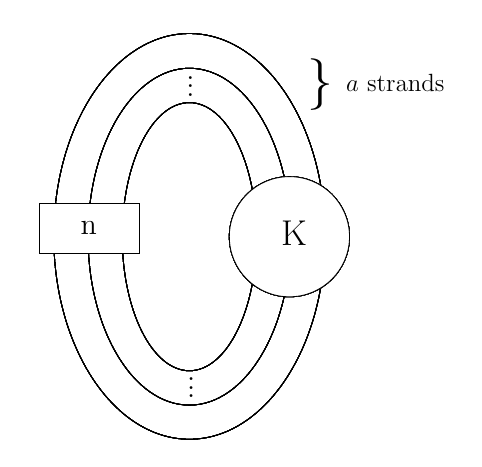}
    \caption{The oriented cable link $K(a,0)$ associated to an $n$-framed knot $K$. The $a$ parallel strands are coherently oriented, and the box labeled $n$ denotes $n$ full twists. In our convention, $K(a,0)$ is the $(a,na)$-cable link.}
    \label{fig:framed-cable}
\end{figure}

\begin{definition}
\label{def:K-a0}
Following the notation of Manolescu and Neithalath
\cite{MN22}, let $K\subset S^3$ be an $n$-framed oriented knot and let $a\geq 1$. We denote by $K(a,0)$ the corresponding oriented $(a,na)$-cable link, with all $a$ components oriented coherently with $K$.
\end{definition}

\begin{lemma}
\label{prop:s-framed-cable}
Let $K \subset S^{3}$ be an oriented knot equipped with framing $n\in\mathbb{Z}$, and let $K(a,0)$ denote the corresponding oriented cable link. Suppose that $K$ satisfies one of the hypotheses of Theorem~\ref{thm:main-s-cable}. Then, for every integer $a>0$,
\[
a\,s(K)+(a-1)(na-1)
\leq
s(K(a,0))
\leq
a\,s(K)+(a-1)(na+1).
\]
Moreover:
\begin{enumerate}
\item If $s(K)=2g_{4}(K)$ and $s(K)=\overline{\operatorname{sl}}(K)+1$, then, for $n>0$,
\[
s(K(a,0))=a\,s(K)+(a-1)(na-1).
\]
\item If $s(K)=-2g_{4}(K)$ and $s(\overline{K})=\overline{\operatorname{sl}}(\overline{K})+1$, then, for $n<0$,
\[
s(K(a,0))=a\,s(K)+(a-1)(na+1).
\]
\end{enumerate}
\end{lemma}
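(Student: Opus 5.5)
This lemma is a direct specialization of Theorem~\ref{thm:main-s-cable}, so the plan is to identify $K(a,0)$ with a coherently oriented cable and substitute parameters. By Definition~\ref{def:K-a0}, $K(a,0)$ is the $(a,na)$-cable link $K_{a,na}$, and all $a$ components carry the orientation induced by $K$. This is exactly the coherent orientation used for $K_{p,q}$ in Theorem~\ref{thm:main-s-cable}.

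Before substituting, I will check the convention carefully. The $a$ parallel copies are pushed off along the $n$-framing. Each copy is therefore homologous in the complement of $K$ to $\lambda+n\mu$, where $\lambda$ is the preferred longitude. Taken together, the link winds $a$ times longitudinally and $na$ times meridionally. This matches the paper's convention that $K_{p,q}$ has $p$ the longitudinal and $q$ the meridional winding number, both measured against the Seifert framing, so we may take $p=a$ and $q=na$. Since $\gcd(a,na)=a$, the cable has $a$ components, which agrees with $K(a,0)$. The only possible subtlety is the sign and ordering convention, in particular the reversed parameter order in \cite{MN22}. The introduction already resolves this: the underlying unoriented link is the $(a+b,n(a+b))$-cable, which for $b=0$ is the $(a,na)$-cable. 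The paper's hypothesis $p>0$ is met because $a\ge 1$, and $q=na$ is allowed to be any integer.

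With $p=a$ and $q=na$, the two-sided bound of Theorem~\ref{thm:main-s-cable} becomes
\[
a\,s(K)+(a-1)(na-1)\le s(K(a,0))\le a\,s(K)+(a-1)(na+1).
\]
For the equality statements I need to translate the sign conditions on $q$. Since $a>0$, the sign of $q=na$ is the sign of $n$. Under hypothesis (i) with $n>0$ we have $q>0$, so the first equality of the theorem gives item (1). Under hypothesis (ii) with $n<0$ we have $q<0$, so the second equality gives item (2).

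The case $n=0$ causes no trouble. There $q=0$, only the bounds are asserted, and the bounds hold for every integer $q$. I expect no substantive obstacle; the only point needing care is the framing-to-cable identification above.
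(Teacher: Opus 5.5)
Your proposal is correct and matches the paper's proof, which simply sets $p=a$ and $q=na$ in Theorem~\ref{thm:main-s-cable}. Your extra checks, namely the framing-to-cable identification and the observation that $\operatorname{sign}(na)=\operatorname{sign}(n)$ for $a>0$, are exactly what that one-line argument relies on.
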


\begin{proof}
The result follows immediately from Theorem~\ref{thm:main-s-cable} by setting
$p=a$ and $q=na$.
\end{proof}

The bounds for the oriented cable links $K(a,0)$ also have applications to the lasagna $s$-invariant of knot traces. We first give a criterion for distinguishing homeomorphic zero traces up to diffeomorphism.

\begin{corollary}
\label{cor:exotic-zero-traces}
Let $K$ and $K'$ be $0$-framed oriented knots such that
\[
s(K)=2g_4(K)=\overline{\operatorname{sl}}(K)+1=2
\qquad\text{and}\qquad
s(K')<2,
\]
and suppose that there is an orientation-preserving homeomorphism from $X_0(K)$ to $X_0(K')$. If there exists an integer $M>0$ such that
\[
s\bigl(K(2r+1,0)\bigr)
\leq
-s\bigl(-K(1+r,r)\bigr)
\]
for every $r\geq M$, then $X_0(K)$ and $X_0(K')$ form an exotic pair of oriented zero traces.
\end{corollary}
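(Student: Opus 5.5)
We need to show $X_0(K)$ and $X_0(K')$ are not orientation-preservingly diffeomorphic. We use the lasagna $s$-invariant of Ren--Willis, which is an invariant of oriented smooth $4$-manifolds up to orientation-preserving diffeomorphism. The plan is to show that the lasagna $s$-invariant of $X_0(K)$ at the class $\alpha=1$ is strictly smaller than that of $X_0(K')$. The homeomorphism gives an identification of $H_2$, and for zero traces both are $\mathbb{Z}$, generated by the core class. A diffeomorphism would carry the generator to $\pm$ the generator, and the invariant is symmetric under $\alpha\mapsto-\alpha$. This symmetry should follow from the reversal $r$ of all components, since $s$ is unchanged by total orientation reversal.

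\textbf{Step 1: a formula for $X_0(K)$.} For a $2$-handlebody with one $0$-framed handle attached along $K$, the Manolescu--Neithalath/Ren--Willis description expresses $s(X_0(K);\alpha)$ as a limit or supremum over cables. The class $\alpha=1$ is represented by $K(1+r,r)$, and the link $K(2r+1,0)$ also appears; the precise normalization is to be taken from \cite{RW25}. The hypothesis
\[
s\bigl(K(2r+1,0)\bigr)\le -s\bigl(-K(1+r,r)\bigr)\quad\text{for all } r\ge M
\]
is designed to reduce this limit to quantities computed by Lemma~\ref{prop:s-framed-cable}. With $n=0$, that lemma gives the exact value
\[
s(K(2r+1,0))=(2r+1)s(K)-2r=2.
\]
The anticipated conclusion is $s(X_0(K);1)\le 1-\text{const}$, or more concretely that the invariant is bounded above by $s(K)-2=0$, or a similar value below the corresponding quantity for $K'$.

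\textbf{Step 2: a bound for $X_0(K')$.} For $K'$ we use the general lower bound from Ren--Willis: the lasagna invariant of a trace at the generator is at least something like $s(K')$, or $-\infty$ is excluded and the value is bounded below in terms of $s(K')$. Since $s(K')<2$ while Step 1 forces the $K$-side value to be governed by $s(K)=2$, the two invariants differ. Equivalently, one can compare both traces via the inequality $s(X_0(L);1)\le s(L)-1$, which is sharp when the cable conditions hold. Distinct values then give non-diffeomorphic traces, and the homeomorphism hypothesis makes the pair exotic.

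\textbf{Main obstacle.} The main difficulty is Step 1: pinning down the exact Ren--Willis formula for a $0$-trace at class $1$, including the shift by framing and by $\alpha^2$. We then need to check that the sandwich formed by the hypothesis and Lemma~\ref{prop:s-framed-cable} forces the relevant limit to equal the value determined by $s(K)=2$. We also need to verify that this value is incompatible with $s(K')<2$, using the monotonicity of the lasagna invariant under the known bounds for knot traces. We would first try computing both sides for $r\ge M$ and taking $r\to\infty$ directly from the defining formula.
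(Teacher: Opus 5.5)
There is a genuine gap: every inequality in your plan runs in the wrong direction, and the one numerical input you take from Lemma~\ref{prop:s-framed-cable} is incorrect.

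\textbf{The direction of the comparison.} With $n=0$ and $\alpha=1$ we have $w=0$ and $-|\alpha|+1=0$. So Remark~\ref{rem:trace-formula} gives, for any knot $J$,
$s(X_0(J);1)=\lim_{r\to\infty}\bigl(-s(-J(1+r,r))-2r\bigr)$.
This formula yields exactly two kinds of information.
\begin{enumerate}
\item The sequence $s(-J(1+r,r))+2r$ is non-decreasing. Evaluating it at $r=0$ therefore gives the \emph{upper} bound $s(X_0(J);1)\le -s(-J)=s(J)$. Note that this bound is $s(J)$, not $s(J)-1$ as you wrote.
\item The hypothesis $s(K(2r+1,0))\le -s(-K(1+r,r))$ can only bound $-s(-K(1+r,r))$ from \emph{below}.
\end{enumerate}
Consequently, the hypothesis gives you no upper bound on $s(X_0(K);1)$. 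Likewise, the general Ren--Willis theory gives you no lower bound on $s(X_0(K');1)$. Your plan to prove $s(X_0(K);1)<s(X_0(K');1)$ cannot be carried out with these tools. The true comparison is the reverse:
$s(X_0(K);1)\ge 2>s(K')\ge s(X_0(K');1)$.
Your off-by-one bound would in fact contradict this, since it would force $s(X_0(K);1)\le 1$.

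\textbf{The value from Lemma~\ref{prop:s-framed-cable}.} For $n=0$ the lemma gives only the two-sided bounds, not an exact value. The equality in item (1) requires $n>0$, that is, $q=na>0$. Moreover, the lower bound is $(2r+1)s(K)-2r=2r+2$, not $2$.

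This linear growth is the crux of the argument, because it cancels the $-2r$ inside the limit. For all $r\ge M$ it gives
$-s(-K(1+r,r))-2r\ge s(K(2r+1,0))-2r\ge 2$,
and hence $s(X_0(K);1)\ge 2$. With your value $2$ you would only obtain $-s(-K(1+r,r))-2r\ge 2-2r$, which says nothing in the limit.

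\textbf{What survives.} Once these two points are fixed, your final step works as you intended. You get $s(X_0(K);1)\ge 2>s(X_0(K');1)$. The symmetry $s(X;1)=s(X;-1)$ then rules out an orientation-preserving diffeomorphism. Your justification of that symmetry by total orientation reversal is fine, since $K(r,1+r)$ reversed is $K(1+r,r)$ and $s$ is invariant under total reversal.
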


Although Corollary~\ref{cor:exotic-zero-traces} concerns the zero-framed case, the cable bounds yield a related eventual comparison for a broader range of framings.

\begin{proposition}
\label{prop:eventual-comparison}
Let $K$ be an oriented knot equipped with framing $n$, and suppose that $K$ satisfies one of the hypotheses of Theorem~\ref{thm:main-s-cable}. Consider the sequences
\[
\left\{s\bigl(K(2r+1,0)\bigr)\right\}_{r\geq0}
\qquad\text{and}\qquad
\left\{-s\bigl(-K(1+r,r)\bigr)\right\}_{r\geq0}.
\]
In each of the following cases, there exists an integer $M>0$ such that, for every $r\geq M$,
\[
s\bigl(K(2r+1,0)\bigr)
\leq
-s\bigl(-K(1+r,r)\bigr):
\]
\begin{enumerate}
\item $n=0$ and $s(K)<0$;
\item $n<0$, with no additional restriction on $s(K)$.
\end{enumerate}
\end{proposition}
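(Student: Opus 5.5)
The plan is to bound the two sequences separately: an upper bound on $s\bigl(K(2r+1,0)\bigr)$ from Lemma~\ref{prop:s-framed-cable}, and a lower bound on $-s\bigl(-K(1+r,r)\bigr)$ from an explicit cobordism and the Beliakova--Wehrli inequality. I would then compare the two as functions of $r$.

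\emph{Upper bound.} Apply Lemma~\ref{prop:s-framed-cable} with $a=2r+1$. This gives
\[
s\bigl(K(2r+1,0)\bigr)\le (2r+1)s(K)+2r\bigl(n(2r+1)+1\bigr)=4nr^2+r\bigl(2s(K)+2n+2\bigr)+s(K).
\]
This is where the hypotheses of Theorem~\ref{thm:main-s-cable} enter.

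\emph{Lower bound on the mixed-orientation side.} I would build an oriented cobordism between $K(1+r,r)$ and $K=K(1,0)$ as follows.
\begin{enumerate}
\item Pair each of the $r$ reversed strands with an adjacent coherent strand. Each such antiparallel pair of framed pushoffs cobounds an embedded annulus.
\item Attach one oriented saddle across each annulus, $r$ saddles in total. This turns each pair into the boundary of a disk, so the result is $K\sqcup U_r$, where $U_r$ is an unlink of $r$ components split from $K$.
\item Cap the $r$ unknots with $r$ disks.
\end{enumerate}
The resulting surface $\Sigma$ has Euler characteristic $\chi(\Sigma)=-r+r=0$, and every component of $\Sigma$ meets $K(1+r,r)$. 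Taking mirrors and reversing orientations gives the corresponding cobordism between $-K(1+r,r)$ and $-K$. The Beliakova--Wehrli inequality, in the form that bounds the change of $s$ by $-\chi$ plus a correction for the change in the number of components, should then yield
\[
s\bigl(-K(1+r,r)\bigr)\le s(-K)+2r=-s(K)+2r,
\]
that is, $-s\bigl(-K(1+r,r)\bigr)\ge s(K)-2r$. Here I use $s(-K)=-s(K)$ for knots.

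\emph{Comparison.}
\begin{itemize}
\item If $n<0$, the upper bound for $s\bigl(K(2r+1,0)\bigr)$ is dominated by $4nr^2\to-\infty$. The lower bound $s(K)-2r$ is only linear in $r$, so the desired inequality holds for all $r\ge M$ with $M$ large. This needs no assumption on $s(K)$.
\item If $n=0$, the comparison is linear on both sides. The upper bound is $r(2s(K)+2)+s(K)$. Since $s(K)$ is even and negative, $s(K)\le-2$, so $2s(K)+2\le-2$ and the upper bound is at most $s(K)-2r$, which is at most the lower bound. The inequality then holds for every $r$, and in particular for $r\ge M=1$.
\end{itemize}

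The main obstacle is the second step: getting the exact constant in the Beliakova--Wehrli estimate for $-K(1+r,r)$. In the $n=0$ case the slopes of the two bounds coincide exactly when $s(K)=-2$, so any loss larger than $2r$ (for instance from the component-count correction, or from applying the inequality in the unfavourable direction) would break the argument. I would first check carefully the normalization of the Beliakova--Wehrli link invariant and the conditions under which it applies (each component of $\Sigma$ having boundary in the source link), and I would choose whichever direction of the cobordism, from $-K(1+r,r)$ to $-K$ or the reverse, gives the $+2r$ bound. If that still loses a constant, I would use the $\chi=0$ cobordism in both directions to pin $s\bigl(-K(1+r,r)\bigr)$ to within an $r$-independent constant of $-s(K)+2r$; any fixed constant can then be absorbed by taking $M$ large, since the final inequality need only hold eventually.
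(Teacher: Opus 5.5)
Your proof is correct. It uses the same ingredients as the paper but keeps the two estimates separate.

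\textbf{Comparison with the paper.} The paper builds one cobordism $\Sigma_1\sqcup\Sigma_2$ from $K(2r+1,0)\sqcup -K(1+r,r)$ to $K_{2r+1,n(2r+1)-1}\sqcup -K$. Here $\Sigma_2$ is exactly your $\chi=0$ surface: $r$ annuli with both boundaries on the incoming end, plus one product annulus. $\Sigma_1$ is the band cobordism to the neighbouring cable knot. The paper then applies the disjoint-union formula $s(L_1\sqcup L_2)=s(L_1)+s(L_2)-1$ and the upper bound of Theorem~\ref{thm:main-s-cable} for the knot $K_{2r+1,n(2r+1)-1}$. The inequality it obtains is just the sum of the two inequalities that $\Sigma_1$ and $\Sigma_2$ give separately. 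Moreover, $\Sigma_1$ combined with the knot bound gives $s\bigl(K(2r+1,0)\bigr)\le (2r+1)s(K)+2rn(2r+1)+2r$, which is the upper bound you take directly from Lemma~\ref{prop:s-framed-cable}. Your route therefore avoids $\Sigma_1$ and the disjoint-union formula, and loses nothing.

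\textbf{The obstacle you flag does not arise.} In the normalization of Lemma~\ref{lem:mmsw-cobordism} there is no component-count correction. Apply it to your surface as a cobordism from $L_1=-K(1+r,r)$ to $L_2=-K$; every component meets $L_1$. It gives $s(-K)-s\bigl(-K(1+r,r)\bigr)\ge \chi(\Sigma)=0$, that is,
\[
-s\bigl(-K(1+r,r)\bigr)\ge s(K).
\]
This is strictly stronger than the bound $s(K)-2r$ you aimed for, so your comparison goes through. With it, the $n=0$ case needs only $2r\bigl(s(K)+1\bigr)\le 0$, so parity of $s(K)$ is not needed.

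\textbf{Your fallback would not have worked.} The reversed cobordism, from $-K$ to $-K(1+r,r)$, has annular components with no boundary on the incoming end, so Lemma~\ref{lem:mmsw-cobordism} does not apply to it. Also, an $r$-independent loss relative to $s(K)-2r$ cannot be absorbed by enlarging $M$ when $n=0$ and $s(K)=-2$. In that case both of your bounds equal $-2r-2$, as you noticed yourself.
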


Finally, we obtain an upper bound for the lasagna $s$-invariant on knot traces.

\begin{proposition}
\label{prop:lasagna-bound}
Let $K\subset S^{3}$ be an oriented knot equipped with framing $n$, and suppose that $K$ satisfies one of the hypotheses of Theorem~\ref{thm:main-s-cable}. Let $X_n(K)$ denote the $n$-trace of $K$, and identify
\[
H_2(X_n(K))\cong\mathbb{Z}
\]
using the standard generator. Then, for every integer $\alpha>0$,
\[
s\bigl(X_n(K);\alpha\bigr)
\leq
\alpha\bigl(s(K)-n\bigr).
\]
\end{proposition}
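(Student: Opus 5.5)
We need to bound the lasagna invariant $s(X_n(K);\alpha)$ from above. We use Ren--Willis's definition: for a 2-handlebody $X$ obtained by attaching a single $n$-framed 2-handle along $K$, and a class $\alpha$, the lasagna $s$-invariant is a limit (or supremum) over cabling data of normalized $s$-invariants of the cable links $K(a,b)$ with $a-b=\alpha$. Concretely, for the knot trace one has
\[
s\bigl(X_n(K);\alpha\bigr)=\lim_{r\to\infty}\Bigl(s\bigl(K(\alpha+r,r)\bigr)-\text{(normalization)}\Bigr),
\]
and the normalization corrects for the framing contribution $-n\alpha^2$ type terms coming from the Manolescu--Neithalath identification and the grading shifts. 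My plan is to first recall this formula precisely, then bound each term using the only computable pieces we have: the coherent cables $K(a,0)$, controlled by Lemma~\ref{prop:s-framed-cable}.

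The key step is to relate $s(K(\alpha+r,r))$ to $s(K(\alpha,0))$. I would use Ren--Willis's monotonicity/upper-bound property: the sequence defining the lasagna invariant is bounded above by (or obtained through cobordisms from) the $r=0$ term. Alternatively, I would argue directly that $K(\alpha,0)$ is obtained from $K(\alpha+r,r)$ by $r$ saddle moves merging oppositely oriented parallel pairs, each producing an unknotted annulus-type cancellation followed by a disc cap. Beliakova--Wehrli's link cobordism inequality gives $|s(L)-s(L')|\le -\chi(\Sigma)$ (with appropriate component-count corrections) for connected cobordisms, and the orientation choice ensures the resulting bound goes in the direction $s(K(\alpha+r,r))\le s(K(\alpha,0))+(\text{shift linear in } r)$ that is exactly cancelled by the normalization. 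So the lasagna invariant is at most the normalized value of $s(K(\alpha,0))$.

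Then I apply the upper bound of Lemma~\ref{prop:s-framed-cable} with $a=\alpha$:
\[
s(K(\alpha,0))\le \alpha\,s(K)+(\alpha-1)(n\alpha+1),
\]
and subtract the normalization. The quadratic term $n\alpha^2$ should cancel against the framing correction; if the normalization is $n\alpha^2-\alpha+1$ (the $s$ of the corresponding torus-type pattern in $S^3$ shifted by $\alpha-1$), the remainder is $\alpha s(K)-n\alpha$ plus lower-order terms, giving $\alpha(s(K)-n)$. I would check the arithmetic carefully: $(\alpha-1)(n\alpha+1)=n\alpha^2-n\alpha+\alpha-1$, so the claim is exactly that the normalization equals $n\alpha^2+\alpha-1$, which I would verify from Ren--Willis's conventions (e.g. by testing on the unknot with $n=0$, where $X_0(U)=S^2\times D^2$ and known values are available).

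The main obstacle is the first step: pinning down the exact normalization and sign conventions in Ren--Willis (including whether $\alpha$ corresponds to $K(\alpha,0)$ or its mirror, and the orientation-reversal $-L$ appearing in their definition), and justifying that the limit over $r$ is bounded by the $r=0$ coherent term. If direct cobordism arguments get messy, I would instead cite Ren--Willis's general inequality that the lasagna invariant is bounded above by the (normalized) $s$-invariant of any single admissible cabling, which reduces everything to the coherent cable and the lemma.
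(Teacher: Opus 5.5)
There is a genuine gap, and it is exactly the convention issue you set aside. The Ren--Willis formula (Lemma~\ref{lem:ren-willis-formula}) is not a limit of normalized values of $s\bigl(K(\alpha+r,r)\bigr)$. It is built from $-s\bigl(-K(\alpha+r,r)\bigr)-2r$, the negative of the $s$-invariant of the orientation-reversed mirror.

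Your normalization is right, since $w(K(\alpha,0))+\alpha-1=n\alpha^2+\alpha-1$. Reducing to $r=0$ by Ren--Willis monotonicity is also fine, and it gives
\[
s\bigl(X_n(K);\alpha\bigr)\le -s\bigl(-K(\alpha,0)\bigr)-n\alpha^{2}-\alpha+1 .
\]
But the term you must then bound from above is $-s(-K(\alpha,0))$, not $s(K(\alpha,0))$. For links these differ. One has $s(L)\le -s(-L)$ (Proposition~7.6(5) of \cite{MMSW23}), and the inequality is strict in general. For example, take the unknot with $n=0$ and $\alpha=2$: $K(2,0)$ is the two-component unlink, so $s(K(2,0))=-1$ while $-s(-K(2,0))=1$.

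So the upper bound of Lemma~\ref{prop:s-framed-cable} on $s(K(\alpha,0))$ says nothing about the term that actually appears, because the inequality $s(L)\le -s(-L)$ points the wrong way for you. Your arithmetic lands on $\alpha(s(K)-n)$ only because the correct bound on $-s(-K(\alpha,0))$ happens to be the same expression, $\alpha s(K)+(\alpha-1)(n\alpha+1)$.

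The missing idea is a step that removes the mirror. The paper does this by passing through a knot $J$, where $s(-J)=-s(J)$.
\begin{enumerate}
\item Note that $-K(\alpha,0)$ is the coherent $(\alpha,-n\alpha)$-cable of $-K$.
\item Attach $\alpha-1$ positive half-twisted bands. This gives a connected cobordism to the knot $(-K)_{\alpha,-n\alpha+1}\cong -(K_{\alpha,n\alpha-1})$.
\item Corollary~\ref{cor:s-cobordism} together with $s(-J)=-s(J)$ yields $-s(-K(\alpha,0))\le \alpha-1+s(K_{\alpha,n\alpha-1})$.
\item The upper bound of Theorem~\ref{thm:main-s-cable} at $q=n\alpha-1$ gives $s(K_{\alpha,n\alpha-1})\le \alpha s(K)+n\alpha(\alpha-1)$.
\end{enumerate}

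An alternative route is to apply the lower bound of Theorem~\ref{thm:main-s-cable} directly to the coherent $(\alpha,-n\alpha)$-cable of $-K$. You would first check that $-K$ satisfies the hypothesis dual to the one satisfied by $K$; this uses that $\overline{\operatorname{sl}}$ is unchanged under orientation reversal. Some argument of this kind is required, and your plan as written bounds the wrong link invariant.
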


\subsection*{Organization}
Section~\ref{sec:s-cables} extends Van Cott's monotonicity argument to coherently oriented cables and proves Theorem~\ref{thm:main-s-cable} and Proposition~\ref{prop:eventual-comparison}. Section~\ref{sec:lasagna} applies the bounds of Lemma~\ref{prop:s-framed-cable} to prove Corollary~\ref{cor:exotic-zero-traces} and Proposition~\ref{prop:lasagna-bound}. Finally, Section~\ref{sec:tau-cables} introduces a component correction to Van Cott's function and proves Theorem~\ref{tau-main-thm}.

\medskip

\subsection*{Acknowledgements}
The author thanks his advisor, \c{C}a\u{g}atay Kutluhan, for his
continued guidance, helpful discussions, valuable suggestions,
and encouragement throughout this work. The author also thanks
Qiuyu Ren for helpful discussions and feedback on an earlier
draft, and for pointing out that Van Cott's monotonicity theorem
could be extended to coherently oriented cable links. Further thanks are due to Cornelia A.
Van Cott for her helpful feedback and suggestions on the first
draft.

The author used ChatGPT in preparation of this
manuscript to assist with editing and expository improvement, as well as with literature searches. 

\section{Rasmussen Invariants of Coherently Oriented Cables}
\label{sec:s-cables}

The proof of Theorem~\ref{thm:main-s-cable} has two steps. First, we extend Van Cott's monotonicity theorem from cable knots to coherently oriented cable links using the link $s$-invariant cobordism inequality. Second, transverse cabling provides exact values for sufficiently positive cable knots, which determine bounds for all cabling parameters. We then construct surfaces bounding the cable
links to establish the equality statements. We conclude the section by constructing a cobordism to prove
Proposition~\ref{prop:eventual-comparison}. 

Let \(\nu\) be a concordance invariant satisfying the following three properties:
\begin{enumerate}
\item \(\nu\) is a homomorphism from the smooth knot concordance group
\(\mathcal{C}\) to \(\mathbb{Z}\).
\item For every knot \(K\),
\[
|\nu(K)|\leq g_{4}(K),
\]
where \(g_{4}(K)\) denotes the smooth \(4\)-ball genus of \(K\).
\item For every positive torus knot \(T_{p,q}\),
\[
\nu(T_{p,q})
=
\frac{(p-1)(q-1)}{2},
\]
where \(p,q\geq 1\) are relatively prime.
\end{enumerate}

A concordance invariant satisfying these properties is called a
\emph{slice-torus invariant}. In particular, both \(\tau\) and \(s/2\)
are slice-torus invariants.

Fix a knot $K$ and an integer \(p>0\). Following Van Cott \cite{VanCott10}, define the function
\[
h\colon
\{q\in\mathbb{Z}\mid \gcd(p,q)=1\}
\longrightarrow
\frac{1}{2}\mathbb{Z}
\]
by
\[
h(q)
=
\nu(K_{p,q})
-
\frac{p-1}{2}\,q.
\]
Van Cott proved the following result concerning the behavior of \(h\).

\begin{lemma}[Theorem~2 of \cite{VanCott10}]
\label{lem:vancott-monotonicity}
The function \(h\) is non-increasing and bounded from below. More precisely,
if \(q>r\), where both \(q\) and \(r\) are relatively prime to \(p\), then
\[
-(p-1)
\leq
h(q)-h(r)
\leq
0.
\]
\end{lemma}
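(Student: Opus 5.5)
We prove both inequalities by constructing explicit cobordisms between the two cable knots and applying the slice-torus genus bound to their concordance difference. Fix $p>0$ and $q>r$ with $\gcd(p,q)=\gcd(p,r)=1$. It suffices to treat adjacent admissible values, or more simply to build directly a cobordism for arbitrary $q>r$, since the bounds $-(p-1)\le h(q)-h(r)\le 0$ are additive along chains. Written in terms of $\nu$, the claim is
\[
\frac{(p-1)(q-r)}{2}-(p-1)\le \nu(K_{p,q})-\nu(K_{p,r})\le \frac{(p-1)(q-r)}{2}.
\]

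\textbf{Upper bound.} In the standard diagram of $K_{p,q}$, the $p$ strands run parallel to $K$ and the meridional twisting appears as a braid $\Delta^{2\cdot(q/p)}$-type region. Changing $q$ to $r$ amounts to removing $q-r$ copies of the braid word $\sigma_1\sigma_2\cdots\sigma_{p-1}$, i.e. $(p-1)(q-r)$ positive crossings, inside a local $p$-strand box. Resolving (oriented smoothing) a positive crossing is a saddle cobordism. So there is an oriented cobordism from $K_{p,q}$ to $K_{p,r}$ with $(p-1)(q-r)$ saddles. Both ends are knots and the Euler characteristic is $-(p-1)(q-r)$, so after checking connectedness (the intermediate links stay connected because the remaining braid is transitive) the cobordism has genus $(p-1)(q-r)/2$.

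The genus bound (2) together with the homomorphism property gives $|\nu(K_{p,q})-\nu(K_{p,r})|\le (p-1)(q-r)/2$. This bound is too weak on its own. Instead we use the sharper fact that a knot obtained from another by changing a positive crossing to a negative one differs by at most one in $\nu$ in a signed way. The cleanest route is to compare against torus knots: $K_{p,q}\#\,\overline{K_{p,r}}$ is related by saddles to $T_{p,q}\#\,\overline{T_{p,r}}$-type pieces, or more simply we apply the standard slice-torus crossing inequality $\nu(K_-)\le\nu(K_+)\le\nu(K_-)+1$. Turning $q$ into $r$ requires $(p-1)(q-r)/2$ crossing changes in the twist region. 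This gives $0\le \nu(K_{p,q})-\nu(K_{p,r})\le (p-1)(q-r)/2$, which is the upper bound $h(q)\le h(r)$.

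\textbf{Lower bound.} We need $\nu(K_{p,q})-\nu(K_{p,r})\ge (p-1)(q-r)/2-(p-1)$. We build a genus $(p-1)$ cobordism from $K_{p,r}\# T_{p,q-r}$ to $K_{p,q}$, where $T_{p,q-r}$ is taken positive and $q-r>0$. To do this, band the torus knot braid into the cable's twist region using $p-1$ bands. Merging the $p$ strands of the torus braid with the $p$ strands of the cable adds $p-1$ saddles to go one way and then $p-1$ saddles to reconnect, totalling $2(p-1)$ saddles between knots. Hence $|\nu(K_{p,q})-\nu(K_{p,r})-\nu(T_{p,q-r})|\le p-1$, and substituting $\nu(T_{p,q-r})=(p-1)(q-r-1)/2$ gives the lower bound with room to spare. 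The same cobordism also gives an alternative proof of the upper bound.

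\textbf{Main obstacle.} The hard step is verifying precisely that this banding realizes $K_{p,q}$, with the correct total twist $q=r+(q-r)$ of the cabling parameter and with the orientation conventions matching, using exactly $2(p-1)$ saddles. If that count fails, we fall back on the crossing-change inequality. The lower bound would then have to come from boundedness, via $\nu(K_{p,q})=p\nu(K)+(p-1)(q-1)/2$ for $q\gg0$ in favorable cases. That fallback is less general, which is why we aim first to make the banding cobordism work.
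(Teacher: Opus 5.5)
Your lower bound is the substantive half of this lemma. The paper only cites it, and in fact derives the lower bound of its link version, Lemma~\ref{lem:vancott-monotonicity-2}, from it. As written, your lower bound does not go through.

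First, the saddle count and the arithmetic are off. A saddle-only cobordism between knots with $2(p-1)$ saddles has genus $p-1$. So you only obtain $\nu(K_{p,q})-\nu(K_{p,r})\ge \frac{(p-1)(q-r-1)}{2}-(p-1)=\frac{(p-1)(q-r)}{2}-\frac{3}{2}(p-1)$, that is, $h(q)-h(r)\ge -\frac{3}{2}(p-1)$. There is no room to spare: you fall short by $\frac{p-1}{2}$. The right count is $p-1$. For $\alpha,\beta\in B_p$, the closure $\widehat{\alpha\beta}$ arises from the split union $\widehat{\alpha}\sqcup\widehat{\beta}$ by $p$ oriented bands joining corresponding strands, and one of them can be taken to be the connected-sum band. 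Place the closure of $\delta^{q-r}$ in a small ball beside the $p$ parallel strands of the cable. This gives a genus $\frac{p-1}{2}$ cobordism from $K_{p,r}\#T_{p,q-r}$ to $K_{p,q}$. The resulting estimate $|\nu(K_{p,q})-\nu(K_{p,r})-\frac{(p-1)(q-r-1)}{2}|\le\frac{p-1}{2}$ is exactly $-(p-1)\le h(q)-h(r)\le 0$.

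Second, that argument needs $T_{p,q-r}$ to be a knot, and $\gcd(p,q)=\gcd(p,r)=1$ does not guarantee this. For even $p$ it never holds, since $q$ and $r$ are both odd. Reducing to adjacent values does not rescue it. The upper bound survives chaining, but the lower bound $-(p-1)$ is uniform, not additive, so two steps only give $-2(p-1)$. The fix is to overshoot:
\begin{enumerate}
\item Pick $s\ge q-r$ with $\gcd(p,s)=1$.
\item Band $K_{p,r}\#T_{p,s}$ to $K_{p,r+s}$ with $p-1$ saddles.
\item Pass from $K_{p,r+s}$ to $K_{p,q}$ by $(p-1)(r+s-q)$ crossing smoothings, as in the proof of Lemma~\ref{lem:vancott-monotonicity-2}.
\end{enumerate}
This saddle-only cobordism between knots is connected of genus $\frac{(p-1)(1+r+s-q)}{2}$. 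Hence
\[
\nu(K_{p,q})-\nu(K_{p,r})\ge\frac{(p-1)(s-1)}{2}-\frac{(p-1)(1+r+s-q)}{2}=\frac{(p-1)(q-r)}{2}-(p-1),
\]
with the extra genus exactly cancelling the larger torus-knot term.

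Finally, the saddle cobordism in your upper-bound paragraph already proves $h(q)\le h(r)$. The estimate $|\nu(K_{p,q})-\nu(K_{p,r})|\le\frac{(p-1)(q-r)}{2}$ that you call ``too weak'' is precisely that inequality, and it is how the paper argues in Lemma~\ref{lem:vancott-monotonicity-2}. The crossing-change detour is unnecessary, and its crossing count is unjustified.
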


For $\nu=s/2$, we can extend the preceding lemma to all integer cabling parameters. The key input is the following cobordism inequality for the oriented link $s$-invariant.

\begin{lemma}[Beliakova-Wehrli \cite{BW08}; Theorem~1.5 of \cite{MMSW23}]
\label{lem:mmsw-cobordism}
Let $\Sigma\subset I\times S^3$ be an oriented cobordism from $L_1$ to $L_2$. If every connected component of $\Sigma$ has a boundary component in $L_1$, then
\[
s(L_2)-s(L_1)\ge \chi(\Sigma).
\]
\end{lemma}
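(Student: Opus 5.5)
The plan is to follow Rasmussen's original argument for knots, transplanted to links in the way Beliakova and Wehrli did it, working in Lee's deformation of Khovanov homology over $\mathbb{Q}$ (or Bar-Natan's, to avoid denominators). For an oriented link $L$ with diagram $D$, Lee homology has a basis of canonical generators $\mathfrak{s}_{o}$, one for each orientation $o$ of $L$. The $s$-invariant is defined from the quantum filtration grading of the class of $\mathfrak{s}_{o_L}$, where $o_L$ is the given orientation. More precisely, I would use the normalization
\[
s(L)=\operatorname{gr}_q[\mathfrak{s}_{o_L}+\mathfrak{s}_{\overline{o_L}}]+1=\operatorname{gr}_q[\mathfrak{s}_{o_L}-\mathfrak{s}_{\overline{o_L}}]-1 .
\]
This matches Rasmussen's $s$ on knots.

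The first step is to decompose $\Sigma$ into a movie: Reidemeister moves, births, deaths and saddles. Each elementary move induces a map on Lee homology that is filtered of degree $\chi$ of that elementary piece: degree $0$ for Reidemeister moves (filtered isomorphisms), $+1$ for births and deaths, and $-1$ for saddles. Composing these gives a map $\Phi_\Sigma$ of filtered degree $\chi(\Sigma)$. This part is standard and I would simply invoke the movie-move functoriality up to sign.

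The second, key step is to track canonical generators. The orientation of $\Sigma$ restricts to $o_{L_1}$ on $L_1$ and $o_{L_2}$ on $L_2$. I would show, move by move as in Rasmussen's Proposition~4.1, that
\[
\Phi_\Sigma(\mathfrak{s}_{o_{L_1}})=\lambda\,\mathfrak{s}_{o_{L_2}}+(\text{terms }\mathfrak{s}_{o'}\text{ with }o'\ne o_{L_2})
\]
with $\lambda\neq 0$, and similarly for the reversed orientation. Reidemeister moves and saddles compatible with the orientation send canonical generators to nonzero multiples of canonical generators. A birth sends $\mathfrak{s}_o$ to a sum of the two generators obtained by orienting the new unknot either way. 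A death kills everything except the parts in which the dying circle carries the orientation inherited from the surface. The hypothesis that every component of $\Sigma$ meets $L_1$ is exactly what prevents a component from being created by a birth and then destroyed without being tied to an orientation coming from $L_1$, which could make the coefficient vanish. I would organize this by an induction on the movie, keeping track of which components of the current slice are connected through $\Sigma$ to $L_1$, and showing that the coefficient of the orientation induced by $\Sigma$ stays nonzero. This is the main obstacle: handling deaths of circles whose orientation is not determined by $L_1$, and the sign and normalizing-constant bookkeeping. I would first try Rasmussen's trick of working with $\mathfrak{s}_o\pm\mathfrak{s}_{\bar o}$ and the orthogonal decomposition of Lee homology by orientations.

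Finally, with nonvanishing in hand, I would combine filtered degree with the fact that projection onto the $\mathfrak{s}_{o_{L_2}}\pm\mathfrak{s}_{\overline{o_{L_2}}}$ part does not decrease filtration, as in the knot case via the $\mathbb{Z}/4$ grading splitting. This gives
\[
\operatorname{gr}_q[\mathfrak{s}_{o_{L_2}}+\mathfrak{s}_{\overline{o_{L_2}}}]\ge \operatorname{gr}_q[\mathfrak{s}_{o_{L_1}}+\mathfrak{s}_{\overline{o_{L_1}}}]+\chi(\Sigma),
\]
that is, $s(L_2)-s(L_1)\ge\chi(\Sigma)$.
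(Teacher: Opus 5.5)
The paper does not prove this lemma; it quotes it from Beliakova--Wehrli and Manolescu--Marengon--Sarkar--Willis. Their argument works in the Lee-homology framework you outline. Your sketch has a genuine gap in how it handles canonical generators.

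\textbf{The final step fails.} You allow $\Phi_\Sigma(\mathfrak{s}_{o_{L_1}})$ to contain terms $\mathfrak{s}_{o'}$ with $o'\neq o_{L_2}$ and then project them away. For links with at least two components, projection onto $\operatorname{span}\{\mathfrak{s}_{o},\mathfrak{s}_{\bar o}\}$ along the other canonical generators is not filtered. The $\mathbb{Z}/4$ splitting separates $\mathfrak{s}_o+\mathfrak{s}_{\bar o}$ from $\mathfrak{s}_o-\mathfrak{s}_{\bar o}$, but it does not separate different orientation classes from each other.

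Concretely, take the crossingless two-component unlink with $\mathbf{a}=v_-+v_+$ and $\mathbf{b}=v_--v_+$. The class $v_+\otimes v_+$ has filtration degree $2$. Its projection $\tfrac14(\mathbf{a}\otimes\mathbf{a}+\mathbf{b}\otimes\mathbf{b})=\tfrac12(v_-\otimes v_-+v_+\otimes v_+)$ has degree $-2$.

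What you actually need is the exact statement $\Phi_\Sigma(\mathfrak{s}_{o_{L_1}})=\lambda\,\mathfrak{s}_{o_{L_2}}$ with $\lambda\neq 0$. This is Rasmussen's nonvanishing result, extended to links by Beliakova--Wehrli. Then compare the degrees of $[\mathfrak{s}_o]$ themselves, using
\[
\operatorname{gr}_q[\mathfrak{s}_o]=\min\{\operatorname{gr}_q[\mathfrak{s}_o+\mathfrak{s}_{\bar o}],\operatorname{gr}_q[\mathfrak{s}_o-\mathfrak{s}_{\bar o}]\}=s(L)-1.
\]
The first equality holds because the two summands lie in different $\mathbb{Z}/4$ pieces. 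This gives $s(L_2)-1\ge s(L_1)-1+\chi(\Sigma)$ directly. Your last display instead compares $\mathfrak{s}_o+\mathfrak{s}_{\bar o}$. That would additionally require the coefficients of $\mathfrak{s}_{o_{L_2}}$ and $\mathfrak{s}_{\overline{o_{L_2}}}$ in the image to agree, which you never establish.

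\textbf{The nonvanishing step is left open and misdiagnosed.} Deaths never kill canonical generators, since $\epsilon(\mathbf{a})=\epsilon(\mathbf{b})=1$. The trouble comes from births. The unit gives $\iota(1)=v_+=\tfrac12(\mathbf{a}-\mathbf{b})$, which mixes both orientations of the new circle. A birth followed by the death of that same circle then gives $\epsilon(v_+)=0$.

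The missing idea is to use the hypothesis topologically rather than through bookkeeping along the movie. Because every component of $\Sigma$ meets $L_1$, each birth can be cancelled against a merging saddle after an isotopy of $\Sigma$ rel boundary. So $\Sigma$ is presented by a movie consisting only of Reidemeister moves, orientation-compatible saddles, and deaths. Each of these sends a canonical generator to a nonzero multiple of a single canonical generator:
\begin{itemize}
\item Circles merged by an oriented saddle carry equal labels, so (with the usual conventions) merges give $\mathbf{a}\otimes\mathbf{a}\mapsto 2\mathbf{a}$ and $\mathbf{b}\otimes\mathbf{b}\mapsto -2\mathbf{b}$.
\item Splits give $\mathbf{a}\mapsto\mathbf{a}\otimes\mathbf{a}$ and $\mathbf{b}\mapsto\mathbf{b}\otimes\mathbf{b}$.
\item Deaths are nonzero on both labels.
\end{itemize}
This yields exactly $\Phi_\Sigma(\mathfrak{s}_{o_{L_1}})=\lambda\,\mathfrak{s}_{o_{L_2}}$ with $\lambda\neq0$, and no other terms to project away.
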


When every component of the cobordism meets both ends, applying the preceding lemma in both directions results in the following corollary which we will use throughout the rest of the section.

\begin{corollary}
\label{cor:s-cobordism}
Suppose that $\Sigma \subset I \times S^3$ is an oriented cobordism from a link $L$ to a link $L'$ such that every connected component of $\Sigma$ has boundary components in both $L$ and $L'$. Then their $s$-invariants satisfy
\[
\left|s(L')-s(L)\right| \leq -\chi(\Sigma).
\]
\end{corollary}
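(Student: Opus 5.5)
The plan is to apply Lemma~\ref{lem:mmsw-cobordism} twice, once to $\Sigma$ and once to $\Sigma$ turned upside down, and then combine the two inequalities.

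\emph{Forward direction.} Regard $\Sigma$ as a cobordism from $L$ to $L'$. By hypothesis every connected component of $\Sigma$ has a boundary component in $L$. Lemma~\ref{lem:mmsw-cobordism} then gives
\[
s(L')-s(L)\ge \chi(\Sigma).
\]

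\emph{Reversed direction.} Let $\overline{\Sigma}$ denote the image of $\Sigma$ under the reflection $(t,x)\mapsto(1-t,x)$ of $I\times S^3$. This is a cobordism from $L'$ to $L$. Some care with orientations is needed here. The reflection reverses the orientation of $I\times S^3$, so to obtain an oriented cobordism from $L'$ to $L$ in the same convention, I will equip the reflected surface with the reversed orientation. Then the incoming end is $L'$ with its given orientation and the outgoing end is $L$, matching the conventions of Lemma~\ref{lem:mmsw-cobordism}. I will check this against the boundary convention $\partial\Sigma=-L\sqcup L'$. The operation is just viewing $\Sigma$ as a cobordism in the opposite direction, and it preserves the Euler characteristic, so $\chi(\overline{\Sigma})=\chi(\Sigma)$.

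Every component of $\overline{\Sigma}$ corresponds to a component of $\Sigma$. By hypothesis each such component has a boundary component in $L'$, which is now the incoming end. Lemma~\ref{lem:mmsw-cobordism} therefore applies and gives
\[
s(L)-s(L')\ge \chi(\Sigma).
\]

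\emph{Conclusion.} Combining the two inequalities yields
\[
-\bigl(s(L')-s(L)\bigr)\le -\chi(\Sigma)
\qquad\text{and}\qquad
s(L')-s(L)\le -\chi(\Sigma),
\]
which is exactly $|s(L')-s(L)|\le -\chi(\Sigma)$.

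The main (and only mildly delicate) step is the orientation bookkeeping in the reversed direction. One must confirm that the reversed cobordism is oriented so that it runs between $L'$ and $L$ with their original orientations, rather than between their mirrors or reverses. The answer is yes, since we only reverse the $I$-direction of the cobordism and do not mirror $S^3$.
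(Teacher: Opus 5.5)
Your proposal is correct and matches the paper's proof: apply Lemma~\ref{lem:mmsw-cobordism} to $\Sigma$ and to $\Sigma$ turned upside down, then combine the two one-sided inequalities. Your orientation bookkeeping is also accurate, and it spells out what the paper leaves implicit: you reflect only the $I$-factor and reverse the surface orientation, so the ends are $L'$ and $L$ themselves rather than their reverses or mirrors.
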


\begin{proof}
Applying Lemma~\ref{lem:mmsw-cobordism} with $L_1=L$ and $L_2=L'$, we obtain
\[
s(L')-s(L)\geq \chi(\Sigma).
\]
Next, reverse the direction of the cobordism by turning $\Sigma$ upside down. The reversed cobordism satisfies the same boundary hypothesis, so Lemma~\ref{lem:mmsw-cobordism} applies again, now with $L_1=L'$ and $L_2=L$. Thus
\[
s(L)-s(L')\geq \chi(\Sigma).
\]
Equivalently,
\[
s(L')-s(L)\leq -\chi(\Sigma).
\]
Combining this inequality with the first one yields
\[
\chi(\Sigma)\leq s(L')-s(L)\leq -\chi(\Sigma),
\]
and hence
\[
\left|s(L')-s(L)\right|\leq -\chi(\Sigma),
\]
as desired.
\end{proof}

\begin{remark}
\label{rem:s-genus-bound}
Let $L \subset S^3$ be an oriented link with $l$ components, and let
$F \subset B^4$ be a connected, smoothly embedded, oriented surface
with boundary $L$ and genus $g(F)=g_4(L)$. Here, $g_4(L)$ denotes the minimum genus among connected, smoothly embedded, oriented surfaces in $B^4$ with boundary $L$. Removing a small open
$4$-ball centered at an interior point of $F$ yields a connected
cobordism $\Sigma \subset S^3 \times I$ from $L$ to the unknot $U$, with
\[
\chi(\Sigma)=1-2g_4(L)-l.
\]
By Corollary~\ref{cor:s-cobordism},
\[
|s(L)-s(U)| \le -\chi(\Sigma)=2g_4(L)+l-1.
\]
Since $s(U)=0$, we obtain
\[
|s(L)| \le 2g_4(L)+l-1.
\]
\end{remark}

\begin{definition}
\label{def:cable-h}
Fix an oriented knot $K\subset S^3$ and a positive integer $p$. For each $q\in\mathbb{Z}$, equip the
cable $K_{p,q}$ with the coherent orientation induced by $K$.
Define
\[
h\colon \mathbb{Z}\longrightarrow \frac{1}{2}\mathbb{Z}
\]
by
\[
h(q)=\frac{s(K_{p,q})}{2}-\frac{p-1}{2}\,q.
\]
This extends Van Cott's definition of $h$ for the invariant $s/2$
to coherently oriented cables, without requiring $p$ and $q$
to be relatively prime.
\end{definition}

\begin{lemma}
\label{lem:vancott-monotonicity-2}
The function \(h\) is non-increasing and bounded from below. More precisely,
for any integers $q>r$,
\[
-(p-1)
\leq
h(q)-h(r)
\leq
0.
\]
\end{lemma}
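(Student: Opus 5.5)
It suffices to treat consecutive parameters and then sum. Suppose the adjacent inequality
\[
-(p-1)\le h(q+1)-h(q)\le 0
\]
holds for every $q\in\mathbb{Z}$. Then for $q>r$ the upper bound $h(q)-h(r)\le 0$ follows by telescoping. The lower bound needs more than a naive telescope, which would only give $-(q-r)(p-1)$. Translating the claim, we must show
\[
s(K_{p,q})-s(K_{p,r})\le 0\cdot 2+(p-1)(q-r)
\qquad\text{and}\qquad
s(K_{p,q})-s(K_{p,r})\ge (p-1)(q-r)-2(p-1).
\]
Following Van Cott's proof, I would realize both estimates by a single explicit cobordism from $K_{p,r}$ to $K_{p,q}$, rather than by summing unit steps. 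Corollary~\ref{cor:s-cobordism} then gives a two-sided bound, and the key is to choose the cobordism so that one side is sharp.

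The construction works inside a solid torus neighborhood of $K$. View $K_{p,q}$ as the closure of the braid $(\sigma_1\cdots\sigma_{p-1})^q$ on $p$ strands, placed in the neighborhood of $K$ using the $0$-framing. Passing from $q=r$ to a larger $q$ inserts $(q-r)(p-1)$ positive crossings. Each positive crossing can be resolved by an oriented saddle. Doing all $(q-r)(p-1)$ saddles on $K_{p,q}$ produces the closure of $(\sigma_1\cdots\sigma_{p-1})^r$, which is $K_{p,r}$. All saddles are oriented because the braid strands are coherently oriented, so crossing changes of this type are oriented band moves. This gives an oriented cobordism $\Sigma$ with $\chi(\Sigma)=-(q-r)(p-1)$. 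Read in the appropriate direction and combined with Lemma~\ref{lem:mmsw-cobordism}, the one-sided inequality $s(\text{target})-s(\text{source})\ge \chi$ yields
\[
s(K_{p,r})\ge s(K_{p,q})-(q-r)(p-1),
\]
which is exactly $h(q)\le h(r)$. One can check the direction using the positive torus link case, where $s$ increases with positive crossings. I expect the main obstacle to be the component hypothesis of Lemma~\ref{lem:mmsw-cobordism}: every component of the cobordism must meet the source link. Since $\gcd$ can jump, I would check that each saddle merges or splits strands within a connected braid-closure picture. Because every component of the closure of any braid word on these strands traverses the neighborhood, each component of $\Sigma$ contains a full strand and so meets both ends.

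For the lower bound, I would use Van Cott's trick. Write $q-r=kp+j$ and compare with full twists. Adding a full twist $\Delta^2=(\sigma_1\cdots\sigma_{p-1})^p$ can alternatively be undone by $p-1$ saddles plus a cobordism of Euler characteristic about $-(p-1)$. More concretely, I would build a second cobordism from $K_{p,r}$ to $K_{p,q}$ using negative crossing resolutions: realize $(\sigma_1\cdots\sigma_{p-1})^{q}$ from $(\sigma_1\cdots\sigma_{p-1})^{r}$ by adding crossings, and compare $s$ through the other inequality of Corollary~\ref{cor:s-cobordism}. Concretely, I would apply Corollary~\ref{cor:s-cobordism} to a cobordism of Euler characteristic $-\bigl((p-1)(q-r)-2(p-1)\bigr)$ or better. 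The cleanest route seems to be Van Cott's: cut $K_{p,q}$ along $p-1$ saddles so that the $p$ parallel strands merge into one. This gives a cobordism from $K_{p,q}$ to the knot $K_{1,\ast}\cong K$ and, symmetrically, from $K_{p,r}$ to $K$. Composing these gives $|s(K_{p,q})-s(K_{p,r})-\text{twist term}|\le 2(p-1)$, after accounting for the framing twist contributed. Making this bookkeeping precise, and verifying the connectivity hypothesis once more, is the step I would check most carefully. The resulting bound is exactly $h(q)-h(r)\ge-(p-1)$.
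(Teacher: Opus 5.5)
\textbf{Upper bound.} Your monotonicity half is sound and is essentially the paper's argument. Write $K_{p,q}$ as the closure of $\delta^{q-r}\delta^{r}$, with $\delta=\sigma_1\cdots\sigma_{p-1}$. Smoothing the $(p-1)(q-r)$ crossings of $\delta^{q-r}$ gives a saddle-only cobordism to $K_{p,r}$, so every component automatically meets both ends. Lemma~\ref{lem:mmsw-cobordism} then gives $h(q)\le h(r)$, and this packaging even spares you the paper's case split on the signs of $q$ and $r$.

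\textbf{The gap: the lower bound.} The bound $h(q)-h(r)\ge-(p-1)$ is not established, and the routes you sketch cannot work. Any cobordism between $K_{p,q}$ and $K_{p,r}$ fed into Corollary~\ref{cor:s-cobordism} only bounds $|s(K_{p,q})-s(K_{p,r})|$ from above. What you need is the lower bound $s(K_{p,q})-s(K_{p,r})\ge(p-1)(q-r)-2(p-1)$. For the unknot, $s(T_{p,q})-s(T_{p,r})=(p-1)(q-r)$ when $q>r>0$, so in general no cobordism with $-\chi=(p-1)(q-r)-2(p-1)$ exists.

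The step ``merge the $p$ strands into $K$ with $p-1$ saddles'' also fails. Oriented saddles inside the tubular neighborhood $V$ preserve the class $p\in H_1(V)\cong\mathbb{Z}$, so you never reach the core. Moreover, $K_{1,m}\cong K$ in $S^3$, so there is no ``twist term'' to account for. A saddle cobordism $K_{p,q}\to K$ with $-\chi=p-1$ would give $|s(K_{p,q})-s(K)|\le p-1$ for every $q$, which is false already for the unknot. Composing two such cobordisms would give $|s(K_{p,q})-s(K_{p,r})|\le 2(p-1)$, which is again false.

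\textbf{The missing idea.} Once monotonicity holds for all integer parameters, the lower bound reduces to the coprime case. Choose $Q>q$ and $R<r$ with $\gcd(p,Q)=\gcd(p,R)=1$. Then $h(Q)\le h(q)\le h(r)\le h(R)$, so $h(q)-h(r)\ge h(Q)-h(R)\ge-(p-1)$. The last inequality is Van Cott's Theorem~2 (Lemma~\ref{lem:vancott-monotonicity}) with $\nu=s/2$, applied to the knots $K_{p,Q}$ and $K_{p,R}$, on which the link $s$-invariant agrees with Rasmussen's. This is exactly how the paper argues.

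If you want a self-contained cobordism proof instead, you must compare against a link whose $s$-invariant grows with the twisting, such as a positive torus link. That comparison is what produces the $(p-1)(q-r)$ term; cobordisms between the two cables alone never can.
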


\begin{proof}

If $p=1$, then $h(q)=s(K)/2$ for every $q$, so the result is immediate. Assume $p>1$, and let $q,r\in\mathbb{Z}$ with $q>r$.
View $K_{p,q}$ as the satellite of the companion knot $K$ with
pattern given by the closure of the braid $\delta^q$ embedded inside a solid torus, where $\delta=\sigma_1\cdots\sigma_{p-1}$. Identify this solid torus with a tubular neighborhood of $K$ using the preferred longitude. All band attachments described below take place within this neighborhood and respect the coherent orientations.

When $q> r\ge 0$, smoothing the crossings in $q-r$ copies of
$\delta$ transforms the pattern $\delta^q$ into $\delta^r$.
When $0\ge q>r$, reverse the analogous smoothing cobordism from
$\delta^r$ to $\delta^q$. If $q>0>r$, concatenate the smoothing
cobordism from $\delta^q$ to the identity braid with the reverse
smoothing cobordism from the identity braid to $\delta^r$.
Each oriented smoothing corresponds to a band attachment, and
each copy of $\delta$ or $\delta^{-1}$ contains $p-1$ crossings.
Thus, in every case, we obtain an oriented cobordism $\Sigma$
from $K_{p,q}$ to $K_{p,r}$ consisting of $(p-1)(q-r)$ band
attachments. Since these bands include a full block of adjacent braid generators, they connect all $p$ strands. Consequently, $\Sigma$ is connected, and
\[
\chi(\Sigma)=-(p-1)(q-r).
\]
By Corollary~\ref{cor:s-cobordism},
\[
s(K_{p,q})-s(K_{p,r})
\le -\chi(\Sigma)
=(p-1)(q-r).
\]
It follows that
\[
h(q)-h(r)
=
\frac{s(K_{p,q})-s(K_{p,r})}{2}
-\frac{p-1}{2}(q-r)
\le 0.
\]
Hence, $h$ is non-increasing.

To establish the lower bound, choose integers $Q$ and $R$, both
relatively prime to $p$, such that $Q>q>r>R$. By monotonicity,
\[
h(Q)\le h(q)\le h(r)\le h(R),
\]
therefore
\[
h(q)-h(r)\ge h(Q)-h(R).
\]
Since $K_{p,Q}$ and $K_{p,R}$ are knots, Van Cott's inequality gives
\[
h(Q)-h(R)\ge -(p-1).
\]
Combining these inequalities, we obtain
\[
-(p-1)\le h(q)-h(r)\le 0,
\]
which also implies that $h$ is bounded from below.
\end{proof}

\begin{proof}[Proof of Theorem~\ref{thm:main-s-cable}]

Suppose that $p>1$, since the case $p=1$ is trivial. We begin by
constructing surfaces bounding the cable links for both
positive and negative cabling parameters.

Let $q\ne 0$, set $d=\gcd(p,q)$, and choose a connected,
smoothly embedded, oriented surface $F\subset B^4$ with
boundary $K$ and genus $g_4(K)$. Take $p$ disjoint parallel
copies of $F$, whose boundaries form $p$ parallel zero-framed copies of $K$. In a collar of the boundary, attach one oriented band
for each crossing of the braid
\[
(\sigma_1\cdots\sigma_{p-1})^q.
\]
The bands have the sign prescribed by $q$, and their attachment
turns the boundary into the coherently oriented cable $K_{p,q}$.
Since $\sigma_1\cdots\sigma_{p-1}$ is a cyclic permutation,
the bands connect all $p$ parallel surfaces. Therefore, the
resulting surface $F_{p,q}$ is connected and has $d$ boundary
components. Moreover,
\[
\chi(F_{p,q})
=
p\,\chi(F)-(p-1)|q|
=
p\bigl(1-2g_4(K)\bigr)-(p-1)|q|.
\]
Writing $\chi(F_{p,q})=2-2g(F_{p,q})-d$, we obtain
\begin{equation}
\label{eq:cable-surface-genus}
g_4(K_{p,q})
\le g(F_{p,q})
=
p\,g_4(K)
+\frac{(p-1)(|q|-1)-(d-1)}{2}.
\end{equation}
We first assume that
\[
s(K)=2g_4(K)
\qquad\text{and}\qquad
s(K)=\overline{\operatorname{sl}}(K)+1.
\]
Choose a transverse representative $\mathcal{T}$ of $K$
realizing the maximal self-linking number. Thus,
\[
\operatorname{sl}(\mathcal{T})=s(K)-1.
\]
Let $N>0$ be relatively prime to $p$ and sufficiently large that
\[
\frac{N}{p}>\lceil\omega(K)\rceil,
\]
where $\omega(K)$ denotes the contact width appearing in
\cite{CEM24}. Since $p>1$ and $\gcd(p,N)=1$, the slope $N/p$ is not an integer. By the transverse cabling results of Chakraborty,
Etnyre, and Min \cite{CEM24}, taking $k=0$, there exists a transverse
representative $\mathcal{T}_{p,N}$ of $K_{p,N}$ satisfying
\[
\operatorname{sl}(\mathcal{T}_{p,N})
=
p\,\operatorname{sl}(\mathcal{T})+(p-1)N;
\]
see the definition preceding Theorem~1.8 and Theorem~1.8
of \cite{CEM24}.
The $s$-Bennequin inequality
\cite[Proposition~4]{Plamenevskaya06} then gives
\begin{align*}
s(K_{p,N})
&\ge \operatorname{sl}(\mathcal{T}_{p,N})+1\\
&=p\bigl(s(K)-1\bigr)+(p-1)N+1\\
&=p\,s(K)+(p-1)(N-1).
\end{align*}
On the other hand, since $\gcd(p,N)=1$, the cable $K_{p,N}$
is a knot. Applying \eqref{eq:cable-surface-genus} and the
genus bound for the knot $s$-invariant yields
\begin{align*}
s(K_{p,N})
&\le 2g_4(K_{p,N})\\
&\le 2p\,g_4(K)+(p-1)(N-1)\\
&=p\,s(K)+(p-1)(N-1).
\end{align*}
Consequently, for all sufficiently large positive integers $N$ relatively prime to $p$, we have
\begin{equation}
\label{eq:large-positive-cable-s}
s(K_{p,N})=p\,s(K)+(p-1)(N-1).
\end{equation}

Next, consider the function $h$ from Definition~\ref{def:cable-h}.
Equation~\eqref{eq:large-positive-cable-s} gives
\[
h(N)=\frac{p\,s(K)-(p-1)}{2}.
\]
For any $q\in\mathbb{Z}$, choose such an $N$ with $N>q$.
By Lemma~\ref{lem:vancott-monotonicity-2},
\[
h(N)\le h(q)\le h(N)+(p-1).
\]
Substituting the definitions of $h(N)$ and $h(q)$ and
rearranging, we obtain
\[
p\,s(K)+(p-1)(q-1)
\le s(K_{p,q})
\le p\,s(K)+(p-1)(q+1).
\]
Suppose now that $q>0$, and set $d=\gcd(p,q)$.
By Remark~\ref{rem:s-genus-bound} and \eqref{eq:cable-surface-genus},
\begin{align*}
s(K_{p,q})
&\le 2g_4(K_{p,q})+d-1\\
&\le 2p\,g_4(K)+(p-1)(q-1)\\
&=p\,s(K)+(p-1)(q-1).
\end{align*}
Combining this with the lower bound above gives
\[
s(K_{p,q})=p\,s(K)+(p-1)(q-1).
\]
This proves the theorem under the first hypothesis.

We now assume that
\[
s(K)=-2g_4(K)
\qquad\text{and}\qquad
s(\overline{K})
=\overline{\operatorname{sl}}(\overline{K})+1.
\]
Since
\[
s(\overline{K})=-s(K)=2g_4(\overline{K}),
\]
the mirror $\overline{K}$ satisfies the first hypothesis.
For every sufficiently large positive integer $N$ relatively
prime to $p$, the first part of the proof gives
\[
s(\overline{K}_{p,N})
=
p\,s(\overline{K})+(p-1)(N-1).
\]
Because
\[
\overline{K_{p,-N}}\cong\overline{K}_{p,N}
\]
and these cables are knots, symmetry of the $s$-invariant for knots under mirroring implies
\[
s(K_{p,-N})
=
p\,s(K)-(p-1)(N-1).
\]
Therefore,
\[
h(-N)=\frac{p\,s(K)+(p-1)}{2}.
\]
Given $q\in\mathbb{Z}$, choose such an $N$ with $-N<q$.
By Lemma~\ref{lem:vancott-monotonicity-2},
\[
h(-N)-(p-1)\le h(q)\le h(-N).
\]
Substituting the definitions and rearranging yields
\[
p\,s(K)+(p-1)(q-1)
\le s(K_{p,q})
\le p\,s(K)+(p-1)(q+1).
\]
Finally, suppose that $q<0$, and set $d=\gcd(p,q)$.
Remark~\ref{rem:s-genus-bound} and \eqref{eq:cable-surface-genus} give
\begin{align*}
s(K_{p,q})
&\ge -2g_4(K_{p,q})-(d-1)\\
&\ge -2p\,g_4(K)-(p-1)(|q|-1)\\
&=p\,s(K)+(p-1)(q+1),
\end{align*}
where the last equality uses $s(K)=-2g_4(K)$ and $q<0$.
Together with the upper bound above, we have
\[
s(K_{p,q})=p\,s(K)+(p-1)(q+1).
\]
This completes the proof.
\end{proof}

For use in the subsequent proofs, we construct band cobordisms
between $K(a,0)$ and its neighboring cable knots.

Let $K\subset S^3$ be an oriented knot with framing $n$, and
let $a$ be a positive integer. Regard $K(a,0)$ as the coherently
oriented $(a,na)$-cable link of $K$. For $t=\pm1$, the cable
$K_{a,na+t}$ is a knot because $\gcd(a,na+t)=1$. Attaching
$a-1$ oriented half-twisted bands between successive parallel
strands gives a connected cobordism
\[
\Sigma_t\colon K(a,0)\longrightarrow K_{a,na+t},
\qquad
\chi(\Sigma_t)=-(a-1).
\]
The coherent orientations allow all bands to have the same
sign. Positive half-twists produce $K_{a,na+1}$, while negative
half-twists produce $K_{a,na-1}$, as illustrated in
Figures~\ref{fig:2a} and~\ref{fig:2b}, respectively.

\begin{figure}[!htbp]
\centering

\begin{subfigure}{0.55\textwidth}
    \centering
    \includegraphics[width=\textwidth]{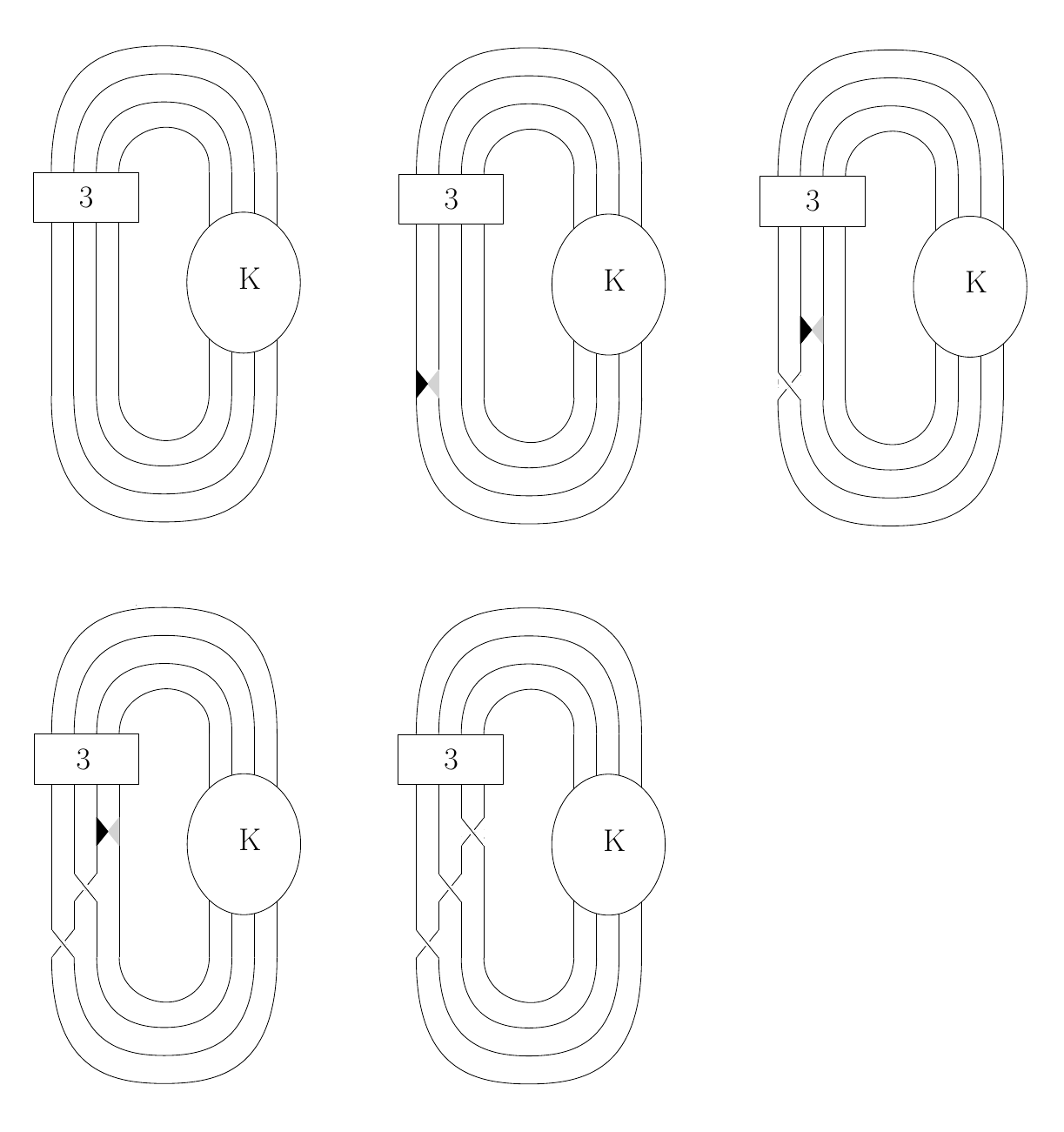}
    \caption{For framing $n=3$, the cobordism from $K(4,0)$ to $K_{4,13}$ obtained by attaching positive half-twisted bands.}
    \label{fig:2a}
\end{subfigure}

\vspace{0.5cm}

\begin{subfigure}{0.55\textwidth}
    \centering
    \includegraphics[width=\textwidth]{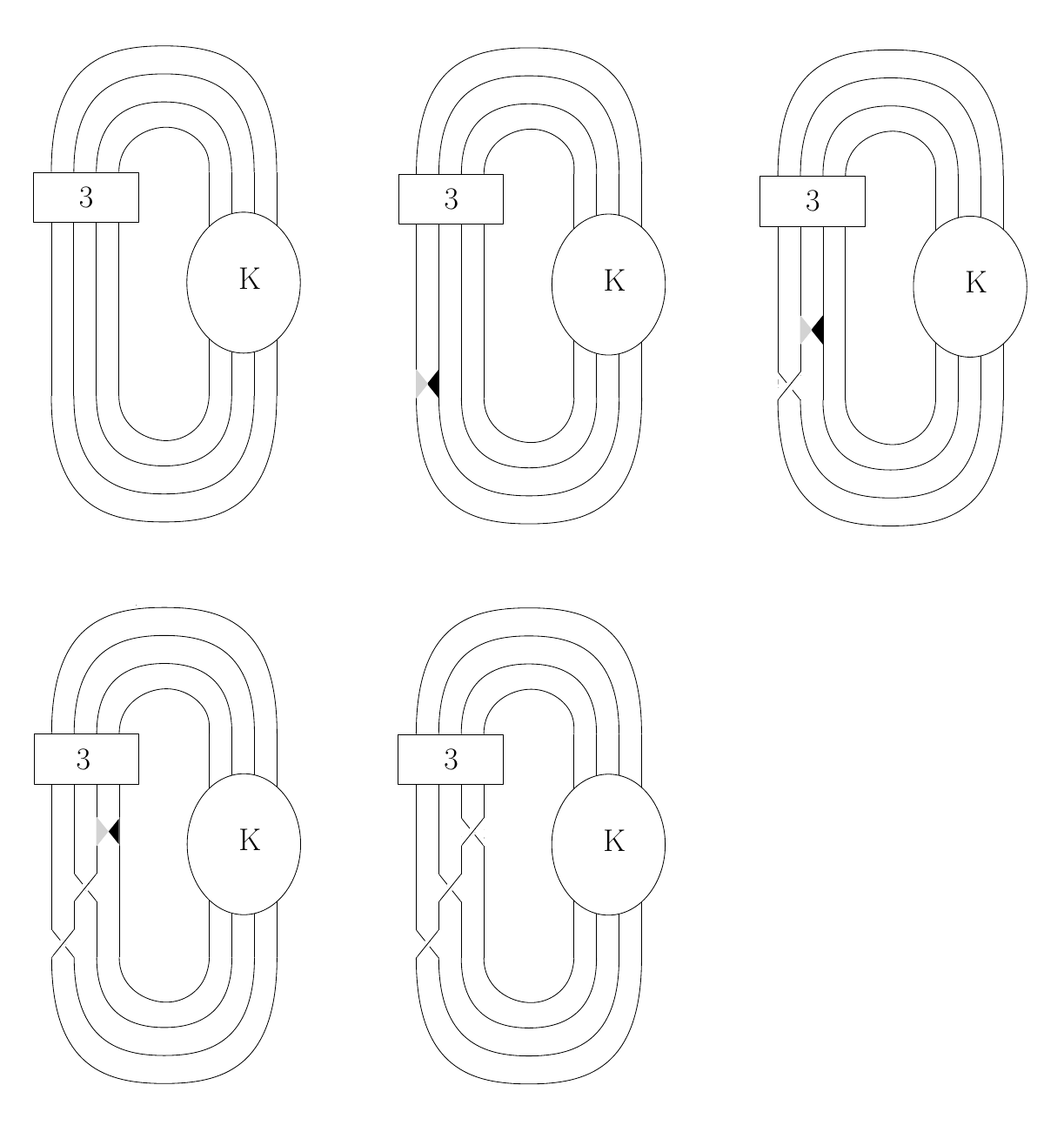}
    \caption{For framing $n=3$, the cobordism from $K(4,0)$ to $K_{4,11}$ obtained by attaching negative half-twisted bands.}
    \label{fig:2b}
\end{subfigure}

\caption{Band cobordisms from $K(a,0)$ to the neighboring cable knots $K_{a,na\pm1}$.}
\label{fig:2}

\end{figure}

\begin{proof}[Proof of Proposition~\ref{prop:eventual-comparison}]
Let $K$ be an oriented knot equipped with framing $n$, and suppose that $K$ satisfies one of the two conditions of Theorem~\ref{thm:main-s-cable}. For $r\geq 0$, consider the links
\[
K(2r+1,0)
\qquad\text{and}\qquad
-K(1+r,r).
\]
We construct a cobordism from $K(2r+1,0)\sqcup -K(1+r,r)$ to $K_{2r+1,n(2r+1)-1}\sqcup -K$.

First, let $\Sigma_1$ be the connected cobordism from $K(2r+1,0)$ to
$K_{2r+1,n(2r+1)-1}$ obtained by attaching $2r$ negative half-twisted bands, as in Figure~\ref{fig:2b}. The choice $t=-1$ gives the required upper bound in Theorem~\ref{thm:main-s-cable}. Next, construct a cobordism $\Sigma_2$ from $-K(1+r,r)$ to $-K$ as follows. Pair the $r$ oppositely oriented parallel pairs and push the annuli that they cobound into the interior of $I\times S^3$; carry the remaining component to the outgoing end by a product annulus. Thus $\Sigma_2$ is the disjoint union of $r$ annuli whose boundaries lie on the incoming end and one product annulus meeting both ends. In particular, every component of $\Sigma_2$ has an incoming boundary component and $\chi(\Sigma_2)=0$. Therefore,
\[
\Sigma=\Sigma_1\sqcup\Sigma_2
\]
is a cobordism from $K(2r+1,0)\sqcup -K(1+r,r)$ to $K_{2r+1,n(2r+1)-1}\sqcup -K$. Since $\chi(\Sigma_1)=-2r$ and $\chi(\Sigma_2)=0$, we have
\[
\chi(\Sigma)=-2r.
\]
Hence, by Lemma~\ref{lem:mmsw-cobordism},
\[
s\bigl(K_{2r+1,n(2r+1)-1}\sqcup -K\bigr)
-
s\bigl(K(2r+1,0)\sqcup -K(1+r,r)\bigr)
\geq -2r.
\]
In the convention of \cite{MMSW23}, Proposition~7.6(4) gives
\[
s(L_1\sqcup L_2)=s(L_1)+s(L_2)-1
\]
for non-empty links $L_1$ and $L_2$. Applying this formula to both ends of the cobordism, the two constant terms cancel, and we obtain
\[
s\bigl(K_{2r+1,n(2r+1)-1}\bigr)
+s(-K)
-s\bigl(K(2r+1,0)\bigr)
-s\bigl(-K(1+r,r)\bigr)
\geq -2r.
\]
Applying the upper bound from Theorem~\ref{thm:main-s-cable} to
$K_{2r+1,n(2r+1)-1}$ gives
\[
s\bigl(K_{2r+1,n(2r+1)-1}\bigr)
\leq
(2r+1)s(K)+2rn(2r+1).
\]
Since $s(-K)=-s(K)$, it follows that
\[
2r\,s(K)+2rn(2r+1)
-s\bigl(K(2r+1,0)\bigr)
-s\bigl(-K(1+r,r)\bigr)
\geq -2r.
\]
Equivalently,
\[
-s\bigl(-K(1+r,r)\bigr)
\geq
s\bigl(K(2r+1,0)\bigr)
-2r-2r\,s(K)-2rn(2r+1).
\]
We now consider the two cases separately. Suppose first that $n=0$ and $s(K)<0$. Since the Rasmussen invariant of a knot is even-valued, $s(K)\leq -2$. Therefore,
\[
-2r-2r\,s(K)
=
-2r\bigl(1+s(K)\bigr)
\geq 0.
\]
Hence, for every $r\geq 0$,
\[
-s\bigl(-K(1+r,r)\bigr)
\geq
s\bigl(K(2r+1,0)\bigr).
\]
Now suppose that $n<0$. The additional term can be written as
\[
-2r-2r\,s(K)-2rn(2r+1)
=
2r\bigl(-1-s(K)-n(2r+1)\bigr).
\]
Since $n<0$, this quantity tends to $+\infty$ as $r\to\infty$. Thus, there exists an integer $M>0$ such that, for every $r\geq M$,
\[
-2r-2r\,s(K)-2rn(2r+1)\geq 0.
\]
Consequently,
\[
-s\bigl(-K(1+r,r)\bigr)
\geq
s\bigl(K(2r+1,0)\bigr)
\]
for every $r\geq M$. This completes the proof.
\end{proof}

\section[Applications to the Lasagna s-Invariant]{Applications to the Lasagna $s$-Invariant}
\label{sec:lasagna}

We now apply Theorem~\ref{thm:main-s-cable} to knot traces. Ren and Willis \cite{RW25} express the lasagna $s$-invariant of a $2$-handlebody as a limit involving oriented cables of its attaching link. We recall this formula, then use it to prove Corollary~\ref{cor:exotic-zero-traces} and Proposition~\ref{prop:lasagna-bound}.

Suppose $X$ is a $2$-handlebody obtained by attaching $2$-handles to $B^4$ along a framed oriented link $K=K_1\cup\cdots\cup K_m\subset S^3$, and let $L\subset\partial X$ be represented by a framed oriented link in $S^3$ disjoint from $K$. Since $X$ is obtained by attaching $m$ $2$-handles, we have $H_2(X)\cong\mathbb{Z}^m$. For $\alpha=(\alpha_1,\alpha_2,\ldots,\alpha_m)\in\mathbb{Z}^m$, define
\[
\alpha^+=(\alpha_1^+,\alpha_2^+,\ldots,\alpha_m^+)
\qquad\text{and}\qquad
\alpha^-=(\alpha_1^-,\alpha_2^-,\ldots,\alpha_m^-),
\]
where $\alpha_i^+=\max\{0,\alpha_i\}$ and $\alpha_i^-=\max\{0,-\alpha_i\}$. Write $|\alpha|=\sum_i |\alpha_i|$ and, for $r=(r_1,\ldots,r_m)\in\mathbb{Z}_{\geq0}^m$, write $|r|=\sum_i r_i$. We denote by $w$ the total writhe of the framed link. This is the linking number of the link with its push-off in the framing direction. Ren and Willis use $-L$ to denote the mirror of $L$. Since the oriented link $s$-invariant is unchanged by simultaneously reversing all component orientations \cite[Proposition~7.6(1)]{MMSW23}, their formula agrees with our convention $-L=m(r(L))$. With this notation, their result takes the following form.

\begin{lemma}[Ren--Willis \cite{RW25}]
\label{lem:ren-willis-formula}
Let $X$ and $L$ be as above. For any $\alpha=(\alpha_1,\alpha_2,\ldots,\alpha_m)\in\mathbb{Z}^m$, we have
\[
\begin{aligned}
s(X;L;\alpha)
={}&-\lim_{r\to\infty}
\left(
s\bigl(-(K(\alpha^++r,\alpha^-+r)\cup L)\bigr)
+2|r|
\right)\\
&-w\bigl(K(\alpha^+,\alpha^-)\cup L\bigr)
-|\alpha|+1.
\end{aligned}
\]
where $r=(r_1,\ldots,r_m)\in\mathbb{Z}_{\geq 0}^m$ and $r\to\infty$ means that each $r_i\to\infty$. Moreover, the quantity appearing inside the limit is non-decreasing with respect to $r$.
\end{lemma}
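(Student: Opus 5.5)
The plan is to take the formula from Ren and Willis \cite{RW25} and only check that it survives translation into the conventions of this paper; the monotonicity statement will be rederived from Lemma~\ref{lem:mmsw-cobordism}.

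\textbf{Step 1: translating conventions.} Ren and Willis state their theorem for a $2$-handlebody $X$ presented by a framed oriented link $K$, with $L\subset\partial X$, using three conventions of their own:
\begin{itemize}
\item their symbol $-L$ denotes the mirror of $L$;
\item their parallel cables $K(\alpha^+,\alpha^-)$ are those of Manolescu--Neithalath, where the cabling parameters are written in the opposite order;
\item $w$ is the total writhe, that is, the linking number of the link with its framing push-off.
\end{itemize}
The underlying unoriented link $K(a,b)$ is the same in both conventions; only the name of the cable changes. The orientation convention is where a check is needed. Our $-L=m(r(L))$ differs from the plain mirror $m(L)$ by reversing every component. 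By \cite[Proposition~7.6(1)]{MMSW23}, the link $s$-invariant is unchanged under simultaneous reversal of all components. Hence $s(m(L'))=s(m(r(L')))$ for $L'=K(\alpha^++r,\alpha^-+r)\cup L$, and each term of their formula equals the corresponding term above. The normalisation $-|\alpha|+1$ and the sign of $w$ I would copy verbatim from \cite{RW25}, after confirming that their normalisation of $s$ for links agrees with that of \cite{MMSW23} used here (including the shift in $s(L_1\sqcup L_2)=s(L_1)+s(L_2)-1$).

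\textbf{Step 2: monotonicity in $r$.} It suffices to increase a single $r_i$ by one. Set
\[
L_r = -\bigl(K(\alpha^++r,\alpha^-+r)\cup L\bigr),
\]
and let $L_{r+e_i}$ be the link obtained by adding one more oppositely oriented parallel pair of copies of $K_i$. I want an oriented cobordism $\Sigma$ from $L_r$ to $L_{r+e_i}$ such that every component of $\Sigma$ meets $L_r$ and $\chi(\Sigma)=-2$. Lemma~\ref{lem:mmsw-cobordism} would then give
\[
s(L_{r+e_i})-s(L_r)\ge -2,
\]
which is exactly the statement that $s(L_r)+2|r|$ is non-decreasing.

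\textbf{Step 3: building the cobordism (main obstacle).} The cobordism must not contain a birth: a newly born component would fail the hypothesis of Lemma~\ref{lem:mmsw-cobordism}. My first attempt works locally on one strand $\gamma$ of the $i$-th cable, oriented like $K_i$.
\begin{itemize}
\item Push a finger of $\gamma$ once around the solid torus, parallel to $\gamma$ in the framing direction.
\item Attach an oriented saddle that cuts the finger at its tip. This turns $\gamma$ into three parallel strands oriented $+,-,+$, closed up into at most two components.
\item Attach one more saddle to separate the closed components into the desired parallel copies.
\end{itemize}
The result should be the original strand together with a new pair $(+,-)$ of parallel copies of $K_i$. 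Each saddle contributes $-1$ to $\chi$, so $\chi(\Sigma)=-2$. The whole cobordism is one connected surface emanating from $\gamma$, together with product annuli on the other components, so every component meets $L_r$.

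The delicate points are twofold. First, the new pair must be parallel with respect to the framing $n_i$ and must respect the mirror/orientation convention. Second, the saddles must really be orientable, which fixes the order in which the two bands are attached. If this local picture fails, I would instead cite the monotonicity statement proved directly in \cite{RW25}.
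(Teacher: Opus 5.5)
Your treatment of the formula itself is the same as the paper's. The paper gives no argument beyond citing \cite{RW25} and observing, via \cite[Proposition~7.6(1)]{MMSW23}, that replacing Ren--Willis's plain mirror by $m(r(\cdot))$ leaves $s$ unchanged.

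The genuine difference is the monotonicity clause. The paper simply imports it from \cite{RW25}; you rederive it from Lemma~\ref{lem:mmsw-cobordism}. That rederivation is correct: $s(L_{r+e_i})\ge s(L_r)-2$ is exactly what is needed, and your two-band cobordism exists.
\begin{itemize}
\item Keep the finger inside a thin framing ribbon of $\gamma$, with the outgoing arc on the far level and the returning arc between it and $\gamma$. Then the whole picture is planar in the ribbon.
\item The first band crosses the gap between the tip and the base on the far level. It closes that level into a full circle and leaves an unknot made of the two nearer levels.
\item The second band crosses the remaining gap at the base. It splits that unknot into two oppositely oriented parallel circles.
\end{itemize}
Each band joins oppositely oriented arcs, so both saddles are coherent. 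The result is a connected four-holed sphere with $\chi=-2$ whose only incoming boundary is $\gamma$. Because everything lies in the framing ribbon, the new copies are automatically $n_i$-framed. The mirror/reversal convention is handled by applying $m\circ r$ to the finished cobordism.

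An equivalent and quicker construction is to tube the annulus bounded by the new pair to the product sheet $I\times\gamma$. In dimension four the tube can always be chosen compatibly with the orientations.

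Your route makes monotonicity depend only on the Beliakova--Wehrli cobordism inequality, and it shows why $2|r|$ is the right correction term. The paper's citation is shorter. In both versions the limit formula itself remains a black box taken from \cite{RW25}.
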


\begin{remark}
\label{rem:trace-formula}
When the attaching link $K$ is a knot, $L=\varnothing$, and $\alpha=1$, the $2$-handlebody $X$ is precisely the $n$-trace $X_n(K)$. In this case, $H_2(X_n(K))\cong\mathbb{Z}$, $\alpha^{+}=1$, and $\alpha^{-}=0$. Thus, the formula in the preceding lemma reduces to
\[
s\bigl(X_n(K);1\bigr)
=
-\lim_{r\to\infty}
\left(
s\bigl(-K(1+r,r)\bigr)+2r
\right)
-w\bigl(K(1,0)\bigr).
\]
Under the usual framing convention, $K(1,0)$ is the $n$-framed knot $K$, so that $w\bigl(K(1,0)\bigr)=n$. Hence,
\[
s\bigl(X_n(K);1\bigr)
=
-\lim_{r\to\infty}
\left(
s\bigl(-K(1+r,r)\bigr)+2r
\right)-n.
\]
\end{remark}

Lemma~\ref{lem:ren-willis-formula} and Remark~\ref{rem:trace-formula} now give the tools needed for the applications stated in the introduction.

\begin{proof}[Proof of Corollary~\ref{cor:exotic-zero-traces}]
Let $K$ and $K'$ be $0$-framed oriented knots satisfying
\[
s(K)=2g_4(K)=\overline{\operatorname{sl}}(K)+1=2
\qquad\text{and}\qquad
s(K')<2,
\]
and suppose that there is an orientation-preserving homeomorphism from $X_0(K)$ to $X_0(K')$.
Assume that there exists an integer $M>0$ such that, for every $r\geq M$,
\[
s\bigl(K(2r+1,0)\bigr)
\leq
-s\bigl(-K(1+r,r)\bigr).
\]
Subtracting $2r$ from both sides gives
\[
s\bigl(K(2r+1,0)\bigr)-2r
\leq
-s\bigl(-K(1+r,r)\bigr)-2r.
\]
Applying the lower bound of Lemma~\ref{prop:s-framed-cable} with $a=2r+1$ and $n=0$, we obtain
\[
s\bigl(K(2r+1,0)\bigr)
\geq
(2r+1)s(K)-2r.
\]
Since $s(K)=2$, this becomes
\[
s\bigl(K(2r+1,0)\bigr)\geq 2r+2.
\]
Therefore, for every $r\geq M$,
\[
2
\leq
s\bigl(K(2r+1,0)\bigr)-2r
\leq
-s\bigl(-K(1+r,r)\bigr)-2r.
\]
Passing to the limit as $r\to\infty$, we obtain
\[
\lim_{r\to\infty}
\left(
-s\bigl(-K(1+r,r)\bigr)-2r
\right)
\geq 2.
\]
By Remark~\ref{rem:trace-formula}, it follows that
\[
s\bigl(X_0(K);1\bigr)\geq 2.
\]
On the other hand, the sequence appearing inside the limit in Lemma~\ref{lem:ren-willis-formula} is non-decreasing. Evaluating it at $r=0$ therefore gives
\[
s\bigl(X_0(K');1\bigr)
\leq
s(K')
<
2.
\]
Hence,
\[
s\bigl(X_0(K);1\bigr)
>
s\bigl(X_0(K');1\bigr).
\]
By the symmetry property of the lasagna $s$-invariant, $s(X;1)=s(X;-1)$ when $H_2(X)\cong\mathbb{Z}$; see Theorem~4.11(2) of \cite{RW25}. Any orientation-preserving diffeomorphism between the two zero traces would send a generator of second homology to a generator, hence to either $1$ or $-1$, and would preserve the corresponding lasagna $s$-invariant. The strict inequality above therefore rules out such a diffeomorphism. Together with the assumed orientation-preserving homeomorphism, this shows that $X_0(K)$ and $X_0(K')$ form an exotic pair of oriented zero traces.
\end{proof}

\begin{remark}
Note that, for a fixed knot $K$ and any $r\ge0$, Proposition 7.6(5) of \cite{MMSW23} gives
\[
s\bigl(K(1+r,r)\bigr)\leq -s\bigl(-K(1+r,r)\bigr).
\]
Moreover, as unoriented links, $K(2r+1,0)$ and $K(1+r,r)$ are the same. Suppose that there exists $M\geq0$ such that, for every $r\geq M$, the change of orientation from $K(2r+1,0)$ to $K(1+r,r)$ does not decrease the $s$-invariant; that is,
\[
s\bigl(K(2r+1,0)\bigr)\leq s\bigl(K(1+r,r)\bigr).
\]
Combining this with the inequality above, we obtain
\[
s\bigl(K(2r+1,0)\bigr)
\leq
s\bigl(K(1+r,r)\bigr)
\leq
-s\bigl(-K(1+r,r)\bigr).
\]
Hence, in order to verify the hypothesis of the preceding corollary, it is sufficient to check whether
\[
s\bigl(K(2r+1,0)\bigr)\leq s\bigl(K(1+r,r)\bigr)
\]
holds for all sufficiently large $r$.
\end{remark}

\begin{proof}[Proof of Proposition~\ref{prop:lasagna-bound}]
Let $K\subset S^{3}$ be an oriented knot equipped with framing $n$, and suppose that $K$ satisfies one of the hypotheses of Theorem~\ref{thm:main-s-cable}. Let $X_n(K)$ denote the $n$-trace of $K$, identify $H_2(X_n(K))\cong\mathbb{Z}$ using the standard generator, and fix an integer $\alpha>0$. By Lemma~\ref{lem:ren-willis-formula}, the sequence inside the limit is non-decreasing. Hence its limiting value is at least its value at $r=0$. Taking $L=\varnothing$, $\alpha^{+}=\alpha$, and $\alpha^{-}=0$, and using
\[
w\bigl(K(\alpha,0)\bigr)=n\alpha^{2},
\]
we obtain
\[
s\bigl(X_n(K);\alpha\bigr)
\leq
-s\bigl(-K(\alpha,0)\bigr)-n\alpha^{2}-\alpha+1.
\]

It remains to bound $-s\bigl(-K(\alpha,0)\bigr)$. In the cabling convention used throughout this paper, $K(\alpha,0)$ is the $(\alpha,n\alpha)$-cable link of $K$. Consequently, $-K(\alpha,0)$ is the corresponding $(\alpha,-n\alpha)$-cable link of $-K$. We use the band cobordisms described in Section~\ref{sec:s-cables}. Attaching $\alpha-1$ positive half-twisted bands gives a connected cobordism
\[
\Sigma\colon
-K(\alpha,0)
\longrightarrow
(-K)_{\alpha,-n\alpha+1}.
\]
Since
\[
(-K)_{\alpha,-n\alpha+1}
\cong
-\bigl(K_{\alpha,n\alpha-1}\bigr),
\]
we may equivalently regard the outgoing knot as the orientation-reversed mirror of $K_{\alpha,n\alpha-1}$. Moreover,
\[
-\chi(\Sigma)=\alpha-1.
\]
Therefore, Corollary~\ref{cor:s-cobordism} gives
\[
\left|
s\bigl(-\bigl(K_{\alpha,n\alpha-1}\bigr)\bigr)
-
s\bigl(-K(\alpha,0)\bigr)
\right|
\leq
\alpha-1.
\]
In particular,
\[
-s\bigl(-K(\alpha,0)\bigr)
\leq
\alpha-1-s\bigl(-\bigl(K_{\alpha,n\alpha-1}\bigr)\bigr).
\]
Using $s(-J)=-s(J)$ for every oriented knot $J$, we obtain
\[
-s\bigl(-K(\alpha,0)\bigr)
\leq
\alpha-1+s\bigl(K_{\alpha,n\alpha-1}\bigr).
\]
Applying the upper bound from Theorem~\ref{thm:main-s-cable}, we have
\[
s\bigl(K_{\alpha,n\alpha-1}\bigr)
\leq
\alpha\,s(K)+(\alpha-1)(n\alpha-1+1),
\]
and hence
\[
s\bigl(K_{\alpha,n\alpha-1}\bigr)
\leq
\alpha\,s(K)+n\alpha(\alpha-1).
\]
Consequently,
\[
-s\bigl(-K(\alpha,0)\bigr)
\leq
\alpha-1+\alpha\,s(K)+n\alpha(\alpha-1).
\]
Combining this bound with the formula from Lemma~\ref{lem:ren-willis-formula}, we obtain
\begin{align*}
s\bigl(X_n(K);\alpha\bigr)
&\leq
\alpha-1+\alpha\,s(K)+n\alpha(\alpha-1)-n\alpha^{2}-\alpha+1 \\
&=
\alpha\,s(K)-n\alpha.
\end{align*}
Therefore,
\[
s\bigl(X_n(K);\alpha\bigr)
\leq
\alpha\bigl(s(K)-n\bigr),
\]
as desired.
\end{proof}

\section[The $\tau$-Invariant of Coherently Oriented Cables]{The $\tau$-Invariant of Coherently Oriented Cables}
\label{sec:tau-cables}

We prove Theorem~\ref{tau-main-thm} by extending the knot-cabling
bounds of Lemma~\ref{lem:tau-cable} to coherently oriented
cable links. To do so, we modify Van Cott's function $h$ by
introducing a correction term depending on the number of cable
link components. Cavallo's one-sided cobordism inequalities establish
monotonicity of the corrected function, allowing us to compare
each cable with cables at larger and smaller coprime parameters.
Throughout this section, $\tau$ denotes Cavallo's link invariant,
which agrees with the classical Ozsváth--Szabó invariant on
knots; see \cite[Theorem~1.3]{CavalloTau}.

\begin{lemma}[Van Cott \cite{VanCott10}; Hom \cite{Hom2014}]
\label{lem:tau-cable}
Let \(K\subset S^{3}\) be a knot. Then, for every \(p>0\) and every integer \(q\) relatively prime to \(p\),
\[
p\,\tau(K)+\frac{(p-1)(q-1)}{2}
\leq
\tau(K_{p,q})
\leq
p\,\tau(K)+\frac{(p-1)(q+1)}{2}.
\]
Moreover, the following formulas hold:
\begin{enumerate}
\item If \(\tau(K)=g_{4}(K)>0\), then
\[
\tau(K_{p,q})
=
p\,\tau(K)+\frac{(p-1)(q-1)}{2}.
\]

\item If \(\tau(K)=-g_{4}(K)<0\), then
\[
\tau(K_{p,q})
=
p\,\tau(K)+\frac{(p-1)(q+1)}{2}.
\]

\end{enumerate}
\end{lemma}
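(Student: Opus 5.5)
The plan is to derive everything from Hom's cabling formula for $\tau$ in terms of her invariant $\varepsilon(K)\in\{-1,0,1\}$. Van Cott's monotonicity (Lemma~\ref{lem:vancott-monotonicity} with $\nu=\tau$) is used only as a consistency check. Recall Hom's theorem \cite{Hom2014}: for $\gcd(p,q)=1$ and $p>0$,
\[
\tau(K_{p,q})=
\begin{cases}
p\,\tau(K)+\frac{(p-1)(q-1)}{2}, & \varepsilon(K)=1,\\[2pt]
p\,\tau(K)+\frac{(p-1)(q+1)}{2}, & \varepsilon(K)=-1,\\[2pt]
\tau(T_{p,q}), & \varepsilon(K)=0.
\end{cases}
\]
We also use that $\varepsilon(K)=0$ forces $\tau(K)=0$.

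\emph{The bounds.} If $\varepsilon(K)=\pm1$, the value of $\tau(K_{p,q})$ is exactly one endpoint of the claimed interval, so there is nothing to prove. If $\varepsilon(K)=0$, then $\tau(K)=0$ and we need $\tau(T_{p,q})$. For $q>0$, $T_{p,q}$ is a positive torus knot, so $\tau(T_{p,q})=(p-1)(q-1)/2$ by the slice-torus property. For $q<0$, mirroring gives $\tau(T_{p,q})=-\tau(T_{p,|q|})=(p-1)(q+1)/2$. In either case the value lies between $(p-1)(q-1)/2$ and $(p-1)(q+1)/2$, which gives the bounds.

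As an independent route, in Van Cott's original style, one could anchor the function $h(q)=\tau(K_{p,q})-\frac{p-1}{2}q$ at one large coprime $N$ using Hedden's computation of $\tau$ for sufficiently positive cables. Lemma~\ref{lem:vancott-monotonicity} then sandwiches every $h(q)$ within $p-1$ of that anchor. Pinning down Hedden's large-$N$ value is exactly where Hom's refinement is needed, so I would rely on her formula directly.

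\emph{The equalities.} Suppose $\tau(K)=g_4(K)>0$. Since $\tau(K)\neq 0$, we have $\varepsilon(K)\neq0$. The main step is to rule out $\varepsilon(K)=-1$, and this is the step I expect to be the main obstacle. If $\varepsilon(K)=-1$, Hom's formula would give
\[
\tau(K_{p,q})=p\,g_4(K)+\frac{(p-1)(q+1)}{2}
\]
for $q>0$ coprime to $p$. But the connected surface $F_{p,q}$ built in the proof of Theorem~\ref{thm:main-s-cable}, inequality~\eqref{eq:cable-surface-genus} with $d=1$, gives
\[
\tau(K_{p,q})\le g_4(K_{p,q})\le p\,g_4(K)+\frac{(p-1)(q-1)}{2},
\]
which is a contradiction when $p>1$; the case $p=1$ is trivial. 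Hence $\varepsilon(K)=1$, and Hom's formula yields the claimed value for \emph{all} coprime $q$, including $q<0$. The genus argument alone could not reach $q<0$, which is why $\varepsilon$ is needed.

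The case $\tau(K)=-g_4(K)<0$ follows by applying the previous case to $\overline{K}$. Here we use $\tau(\overline{K})=-\tau(K)$ and $\overline{K_{p,q}}=\overline{K}_{p,-q}$, which convert one endpoint formula into the other.
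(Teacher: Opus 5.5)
Your proof is correct, and its backbone is the same as the paper's. Both rest on Hom's trichotomy for $\tau(K_{p,q})$ in terms of $\varepsilon(K)$, plus the fact that $\tau(K)=\pm g_4(K)\neq 0$ forces $\varepsilon(K)=\pm1$; this is what yields equality for every coprime $q$ of either sign.

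The difference is in how the ingredients are supplied. The paper argues purely by citation. It credits the bounds to Van Cott (Hedden's large-cable computation combined with monotonicity of $h$), and it takes the identification of $\varepsilon$ from Hom's Theorem~1 and Corollary~4. You instead read the bounds directly off Hom's formula by cases on $\varepsilon$. You also prove $\tau(K)=g_4(K)>0\Rightarrow\varepsilon(K)=1$ yourself, by playing the $\varepsilon=-1$ formula against the surface bound \eqref{eq:cable-surface-genus}. This makes the lemma self-contained modulo Hom's Theorem~1 and the property $\varepsilon=0\Rightarrow\tau=0$, and it reuses a construction the paper already has. Since $\varepsilon(K)$ does not depend on $p$, phrase the contradiction with a fixed pair such as $(p,q)=(2,1)$ rather than ``when $p>1$''. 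Your remark that the genus estimate cannot reach $q<0$ also explains why Theorem~\ref{thm:main-s-cable}, which has no analogue of $\varepsilon$, gets equality only for one sign of $q$.

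One correction to your aside on Van Cott's route. A single anchor at large $N$ confines $h(q)$ only to $[h(N),h(N)+(p-1)]$. That fails to give the upper bound when $h(N)$ is the larger of Hedden's two possible values. Anchoring at both ends (large $N$ for $K$, and for $\overline{K}$ via $\overline{K_{p,-N}}=\overline{K}_{p,N}$) gives the bounds without deciding which value occurs. So pinning down Hedden's value matters only for the equalities, not for the bounds.
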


In Corollary~3 of \cite{VanCott10}, Van Cott established the two formulas above under the hypotheses
\[
\tau(K)=g_{3}(K)
\qquad\text{and}\qquad
\tau(K)=-g_{3}(K),
\]
respectively. Hom's cabling formula in Theorem~1 of \cite{Hom2014}, together with Corollary~4 of the same paper, allows these hypotheses to be weakened. Hom's theorem applies to every pair of relatively prime cabling parameters, so no additional sign restriction on $q$ is needed here. In particular, if
\[
\tau(K)=g_{4}(K)>0,
\]
then $\varepsilon(K)=1$, whereas if
\[
\tau(K)=-g_{4}(K)<0,
\]
then $\varepsilon(K)=-1$. Consequently, the corresponding equality formulas remain valid with the Seifert genus $g_{3}(K)$ replaced by the smooth four-ball genus $g_{4}(K)$.

To extend the knot-cabling bounds beyond the coprime case,
we use Cavallo's link cobordism inequalities. We first recall
the bound from \cite[Proposition~4.7]{CavalloTau}, then
examine the bounds produced in its proof to identify the correction term needed for Van Cott's function $h$.

\begin{lemma}[Proposition~4.7 of \cite{CavalloTau}]
\label{lem:cavallo-cobordism}
Let $\Sigma$ be a smooth, oriented cobordism between links
$L_1$ and $L_2$, and suppose that every connected component
of $\Sigma$ meets both $L_1$ and $L_2$. Then
\[
\left|\tau(L_1)-\tau(L_2)\right|
\leq
g(\Sigma)+\max\{l_1(\Sigma),l_2(\Sigma)\}.
\]
Here, writing
\[
\Sigma=\Sigma_1\sqcup\cdots\sqcup\Sigma_J,
\]
we set
\[
g(\Sigma)=\sum_{k=1}^{J}g(\Sigma_k)
\]
and, for $i=1,2$,
\[
l_i(\Sigma)
=
\sum_{k=1}^{J}
\left(
\#\{\text{components of $L_i$ meeting $\Sigma_k$}\}-1
\right).
\]
\end{lemma}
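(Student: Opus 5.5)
The plan is to reduce the inequality to the behaviour of Cavallo's $\tau$ under elementary link cobordisms, and then organize $\Sigma$ so that the elementary moves can be counted efficiently. Since each component $\Sigma_k$ meets both $L_1$ and $L_2$, I will first use a standard Morse-theoretic rearrangement on each component. Index-$0$ critical points are cancelled against index-$1$ critical points, and index-$2$ critical points are cancelled dually. This presents $\Sigma$ as a sequence of oriented band moves (saddles) only, with no births or deaths, which is legitimate because no component of $\Sigma$ is closed or capped off at one end. An Euler characteristic count shows that on $\Sigma_k$ the saddles can be grouped as follows:
\begin{itemize}
\item $\#\{\text{components of } L_1 \text{ meeting } \Sigma_k\}-1$ \emph{merging} bands, which join the incoming components into a single knot;
\item $2g(\Sigma_k)$ bands, occurring in pairs, each pair adding a handle (a split immediately followed by a merge);
\item $\#\{\text{components of } L_2 \text{ meeting } \Sigma_k\}-1$ \emph{splitting} bands.
\end{itemize}
Reordering the saddles by height, I would arrange that all merges happen first, then all genus pairs, then all splits.

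The key input is the one-sided behaviour of Cavallo's $\tau$ under a single band move. I would take this from Cavallo's paper, where it comes from the skein exact triangle for collapsed link Floer homology together with the behaviour of the Maslov and collapsed Alexander gradings. If $L'$ is obtained from $L$ by a band merging two components, then $\tau(L)-1\le \tau(L')\le \tau(L)$. A splitting band reverses this, giving $\tau(L)\le\tau(L')\le\tau(L)+1$, which follows by turning the band upside down. A genus pair (split then merge) therefore changes $\tau$ by at most $1$ in absolute value. The precise direction depends on Cavallo's sign conventions. The essential point is only that merges and splits push $\tau$ in \emph{opposite} directions, each by at most one unit.

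With the moves ordered as above, the total change splits into three contributions:
\begin{itemize}
\item the merging stage contributes a change in $[-l_1(\Sigma),0]$;
\item the genus stage contributes a change in $[-g(\Sigma),g(\Sigma)]$;
\item the splitting stage contributes a change in $[0,l_2(\Sigma)]$.
\end{itemize}
Adding these gives
\[
-g(\Sigma)-l_1(\Sigma)\le \tau(L_2)-\tau(L_1)\le g(\Sigma)+l_2(\Sigma),
\]
and hence $|\tau(L_1)-\tau(L_2)|\le g(\Sigma)+\max\{l_1(\Sigma),l_2(\Sigma)\}$. Because the merge and split contributions have opposite signs, they never accumulate, and this is exactly why the bound involves the maximum rather than the sum of $l_1$ and $l_2$.

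I expect the main obstacle to be the single-band inequalities, in particular establishing the \emph{one-sided} character of the change for merging and splitting bands rather than only a symmetric bound of $1$. This is where one must work inside the definition of $\tau$ for links via the collapsed filtered complex. I would first try to prove it with the skein exact sequence relating $L$, $L'$ and an intermediate link, tracking how the filtration levels of the generator of the $\widehat{HF}(S^3)$ tower shift. If that proves awkward, a fallback is to pass to knotification, using Cavallo's identification of $\tau(L)$ with a shifted $\tau$ of a knot in $\#^{l-1}S^1\times S^2$. In that setting a merging band becomes a crossing change or a genus-one cobordism of knots, and the knot cobordism bound can then be applied directly.
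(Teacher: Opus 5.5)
Your endgame is correct. If $\Sigma$ consisted of saddles only, then for \emph{any} order of the bands a component with $n_1$ incoming and $n_2$ outgoing circles would have exactly $g(\Sigma_k)+n_1-1$ merges and $g(\Sigma_k)+n_2-1$ splits, so your regrouping into merges, genus pairs and splits is unnecessary. The one-sided band inequalities then give $-g(\Sigma)-l_1(\Sigma)\le \tau(L_2)-\tau(L_1)\le g(\Sigma)+l_2(\Sigma)$. These are exactly the inequalities (A) and (B) that the paper attributes to Cavallo; the paper gives no proof beyond the citation.

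The gap is your first step. Index-$0$ and index-$2$ critical points cannot in general be cancelled by an isotopy of an \emph{embedded} surface in $I\times S^3$, even when every component meets both ends. Puncture a slice disk of the nontrivial slice knot $J=T_{2,3}\#\overline{T_{2,3}}$ to get an annulus from $U$ to $J$. If it had no local extrema, then $\chi=0$ would force it to have no saddles either, so it would be a product and $J=U$.

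This step carries real weight. In normal form with $b$ births and $d$ deaths there are $g+l_1+b$ merges and $g+l_2+d$ splits. Stage-by-stage tracking of $\tau$ therefore yields an interval at least $b+d$ longer than the claimed one, however births and deaths shift $\tau$. For a ribbon concordance from $U$ to $J$, it confines $\tau(J)-\tau(U)$ only to an interval of length $b$ rather than proving it is $0$. So single-band inequalities cannot even give concordance invariance, which is the case $g=l_1=l_2=0$ of the statement.

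The hypothesis that every component meets both ends is exactly what repairs this, though you need an extra input. Put $\Sigma$ in normal form: births, then disjoint bands at one level, then deaths. Since every component meets $L_1$, choose bands forming a forest that joins each newborn unknot to a component coming from $L_1$. Attach them first, in order of distance from $L_1$. Each is then a merge, and the births together with these bands form a ribbon concordance.

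Dually, since every component meets $L_2$, choose bands whose duals join each dying unknot to $L_2$, and attach them last. The two sets can be taken disjoint. In the mod-$2$ handle complex $C_2\to C_1\to C_0$ of $\Sigma$ relative to $L_1$, the hypothesis makes $\partial_1$ onto and $\partial_2$ injective. Since $\operatorname{im}\partial_2\subset\ker\partial_1$, discarding the bands of the first forest keeps $\partial_2$ injective, which is exactly what the second forest needs.

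By concordance invariance of Cavallo's $\tau$, neither collar changes $\tau$. The middle cobordism has only saddles and the same $g$, $l_1$, $l_2$, so your count then applies. Alternatively, as in the grid-homology proof of the slice-genus bound, track a single nontorsion class through graded birth, saddle and death maps instead of the value of $\tau$ at each stage.
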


In proving this bound, Cavallo establishes two inequalities
\begin{align}
\tau(L_2)
&\leq \tau(L_1)+g(\Sigma)+l_2(\Sigma),
\tag{A}\label{eq:cavallo-A}
\\
\tau(L_1)
&\leq \tau(L_2)+g(\Sigma)+l_1(\Sigma).
\tag{B}\label{eq:cavallo-B}
\end{align}
Together, they imply the estimate in
Lemma~\ref{lem:cavallo-cobordism}. Individually, however, they
retain the component count at each end of the cobordism.
This distinction is essential for the monotonicity argument
below and motivates the following modification of Van Cott's
function $h$.

\begin{definition}
\label{def:new-H}
Fix an oriented knot $K\subset S^3$ and a positive integer $p$.
For each $q\in\mathbb{Z}$, equip the cable $K_{p,q}$ with the
coherent orientation induced by $K$, and set $d_q=\gcd(p,q)$.
Define
\[
H\colon\mathbb{Z}\longrightarrow\frac{1}{2}\mathbb{Z},
\]
by
\[
H(q)=\tau(K_{p,q})-\frac{d_q-1}{2}-\frac{p-1}{2}\,q.
\]
Here, $\tau(K_{p,q})$ denotes Cavallo's link $\tau$-invariant.
When $\gcd(p,q)=1$, the correction term vanishes, and
$H(q)$ agrees with Van Cott's function $h(q)$ for the knot
$\tau$-invariant.
\end{definition}

The proof of Theorem~\ref{tau-main-thm} relies on the following monotonicity
result for the function $H$.

\begin{lemma}
\label{lem:vancott-monotonicity-3}
The function $H$ is non-increasing and bounded from below.
More precisely, for any integers $q>r$,
\[
-(p-1)\leq H(q)-H(r)\leq 0.
\]
\end{lemma}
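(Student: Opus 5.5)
The plan mirrors the proof of Lemma~\ref{lem:vancott-monotonicity-2}. The only change is that the symmetric inequality of Corollary~\ref{cor:s-cobordism} is replaced by Cavallo's one-sided inequality \eqref{eq:cavallo-B}. That inequality keeps track of the number of components at the incoming end, which is exactly what the correction term in $H$ absorbs.

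If $p=1$, every $K_{1,q}$ is isotopic to $K$ and $d_q=1$, so $H$ is constant and there is nothing to prove. For $p>1$ and $q>r$, I reuse the cobordism $\Sigma$ from $K_{p,q}$ to $K_{p,r}$ built in the proof of Lemma~\ref{lem:vancott-monotonicity-2}. It consists of oriented smoothings, or reversed smoothings, of $(p-1)(q-r)$ crossings in the braid pattern $\delta^q$, carried out inside the neighbourhood of $K$ determined by the preferred longitude. As argued there, $\Sigma$ is connected and $\chi(\Sigma)=-(p-1)(q-r)$. In particular, its single component meets both ends, so Cavallo's inequalities apply.

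Next I compute the quantities appearing in Lemma~\ref{lem:cavallo-cobordism}. Since $\Sigma$ is connected with $d_q+d_r$ boundary components, $\chi(\Sigma)=2-2g(\Sigma)-d_q-d_r$, which gives
\[
g(\Sigma)=\frac{(p-1)(q-r)+2-d_q-d_r}{2},\qquad l_1(\Sigma)=d_q-1,\qquad l_2(\Sigma)=d_r-1.
\]
Apply \eqref{eq:cavallo-B} with $L_1=K_{p,q}$ and $L_2=K_{p,r}$:
\[
\tau(K_{p,q})\le \tau(K_{p,r})+g(\Sigma)+d_q-1=\tau(K_{p,r})+\frac{(p-1)(q-r)+d_q-d_r}{2}.
\]
Subtracting the correction terms $\tfrac{d_q-1}{2}$ and $\tfrac{d_r-1}{2}$ and the linear terms $\tfrac{p-1}{2}q$ and $\tfrac{p-1}{2}r$ gives exactly $H(q)-H(r)\le 0$. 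The step I expect to need the most care is the choice between \eqref{eq:cavallo-A} and \eqref{eq:cavallo-B}. Using the symmetric bound $\max\{l_1,l_2\}$ would leave an error of $|d_q-d_r|/2$. Only the one-sided inequality measured at the incoming end makes the $d$-terms cancel exactly, and this is why the correction $\tfrac{d_q-1}{2}$ appears in the definition of $H$. I also need to check that Cavallo's conventions for $l_1$ and $l_2$ match the direction $K_{p,q}\to K_{p,r}$.

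For the lower bound I argue as in Lemma~\ref{lem:vancott-monotonicity-2}. Choose $Q>q>r>R$ with $Q$ and $R$ both relatively prime to $p$. By monotonicity, $H(Q)\le H(q)\le H(r)\le H(R)$, so $H(q)-H(r)\ge H(Q)-H(R)$. Since $d_Q=d_R=1$, the values $H(Q)$ and $H(R)$ coincide with Van Cott's $h$ for the knot invariant $\tau$. Lemma~\ref{lem:vancott-monotonicity}, applied with $\nu=\tau$, then gives $H(Q)-H(R)\ge-(p-1)$. This proves $-(p-1)\le H(q)-H(r)\le 0$, and in particular that $H$ is bounded below.
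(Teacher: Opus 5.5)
Your proposal is correct and follows the paper's proof essentially step for step. Both use the same connected band cobordism from Lemma~\ref{lem:vancott-monotonicity-2} and the same genus computation, then apply Cavallo's one-sided inequality \eqref{eq:cavallo-B} with $L_1=K_{p,q}$ so that the $d$-terms cancel against the correction in $H$, and finally obtain the lower bound by comparing with coprime parameters $Q>q>r>R$ via Van Cott's inequality; the convention check you flag is supplied by the paper's explicit statement of \eqref{eq:cavallo-B}.
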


\begin{proof}
The case $p=1$ is immediate since $H(q)=\tau(K)$ for every
$q\in\mathbb{Z}$. Assume $p>1$, and let $q,r\in\mathbb{Z}$
with $q>r$. Set
\[
d_q=\gcd(p,q),
\qquad
d_r=\gcd(p,r).
\]
Consider the oriented cobordism $\Sigma$ from $K_{p,q}$ to
$K_{p,r}$ constructed in the proof of Lemma~\ref{lem:vancott-monotonicity-2}.
This cobordism is connected and consists of $(p-1)(q-r)$
band attachments, so
\[
\chi(\Sigma)=-(p-1)(q-r).
\]
Since its boundary has $d_q+d_r$ components,
\[
g(\Sigma)
=
\frac{2+(p-1)(q-r)-d_q-d_r}{2}.
\]
Applying \eqref{eq:cavallo-B} with $L_1=K_{p,q}$ and $L_2=K_{p,r}$, we obtain
\begin{align*}
\tau(K_{p,q})-\tau(K_{p,r})
&\le g(\Sigma)+d_q-1\\
&=\frac{(p-1)(q-r)+d_q-d_r}{2}.
\end{align*}
Consequently,
\[
H(q)-H(r)
=
\tau(K_{p,q})-\tau(K_{p,r})
-\frac{d_q-d_r}{2}
-\frac{p-1}{2}(q-r)
\le 0.
\]
Thus, $H$ is non-increasing.

For the lower bound, take integers $Q>q$ and $R<r$
with $\gcd(p,Q)=\gcd(p,R)=1$. Since $H$ is non-increasing,
\[
H(r)-H(q)\le H(R)-H(Q).
\]
At the coprime parameters $Q$ and $R$, the correction terms
vanish, so Van Cott's inequality applies and gives
\[
H(R)-H(Q)\le p-1.
\]
Therefore,
\[
H(q)-H(r)\ge -(p-1).
\]
This completes the proof.
\end{proof}

We now have all the ingredients needed to prove Theorem~\ref{tau-main-thm}.

\begin{proof}[Proof of Theorem~\ref{tau-main-thm}]
Let $K\subset S^3$ be an oriented knot, fix a positive integer
$p$, and let $q\in\mathbb{Z}$. Set $d=\gcd(p,q)$, and choose
integers $Q>q$ and $R<q$, both relatively prime to $p$.
By Lemma~\ref{lem:vancott-monotonicity-3},
\[
H(Q)\le H(q)\le H(R).
\]
Since the correction terms vanish at $Q$ and $R$, the bounds
in Lemma~\ref{lem:tau-cable} give
\[
H(Q)
=
\tau(K_{p,Q})-\frac{p-1}{2}Q
\ge p\,\tau(K)-\frac{p-1}{2}
\]
and
\[
H(R)
=
\tau(K_{p,R})-\frac{p-1}{2}R
\le p\,\tau(K)+\frac{p-1}{2}.
\]
Consequently,
\[
p\,\tau(K)-\frac{p-1}{2}
\le H(q)
\le p\,\tau(K)+\frac{p-1}{2}.
\]
Substituting
\[
H(q)=\tau(K_{p,q})-\frac{d-1}{2}-\frac{p-1}{2}q
\]
and rearranging yields
\[
p\,\tau(K)+\frac{(p-1)(q-1)+d-1}{2}
\le \tau(K_{p,q})
\le p\,\tau(K)+\frac{(p-1)(q+1)+d-1}{2}.
\]

Suppose first that $\tau(K)=g_4(K)>0$.
Item~(1) of Lemma~\ref{lem:tau-cable} implies
\[
H(Q)=H(R)=p\,\tau(K)-\frac{p-1}{2}.
\]
Since $H(Q)\le H(q)\le H(R)$, it follows that
\[
H(q)=p\,\tau(K)-\frac{p-1}{2},
\]
and hence
\[
\tau(K_{p,q})
=
p\,\tau(K)+\frac{(p-1)(q-1)+d-1}{2}.
\]

Similarly, if $\tau(K)=-g_4(K)<0$, item~(2) of Lemma~\ref{lem:tau-cable} gives
\[
H(Q)=H(R)=p\,\tau(K)+\frac{p-1}{2}.
\]
Monotonicity therefore forces
\[
H(q)=p\,\tau(K)+\frac{p-1}{2},
\]
which is equivalent to
\[
\tau(K_{p,q})
=
p\,\tau(K)+\frac{(p-1)(q+1)+d-1}{2}.
\]
This completes the proof.
\end{proof}

\small
\bibliographystyle{amsplain}
\bibliography{references}

\normalsize

\noindent
\textsc{Department of Mathematics}\\
\textsc{University at Buffalo, State University of New York}\\
Buffalo, NY 14260, USA\\
\textit{Email:} aadhishw@buffalo.edu

\end{document}